\newtheorem{thm}{Theorem}[section]
\newtheorem{cor}[thm]{Corollary}
\newtheorem{lem}[thm]{Lemma}
\theoremstyle{definition}
\theoremstyle{remark}
\newtheorem{rem}{Remark}[section]
\begin{document}

\title{Algebraic points of small height missing a union of varieties}
\author{Lenny Fukshansky}

\address{Department of Mathematics, 850 Columbia Avenue, Claremont McKenna College, Claremont, CA 91711}
\email{lenny@cmc.edu}
\subjclass{Primary 11G50, 11D99; Secondary 11R04, 11R58}
\keywords{heights, Siegel's lemma, polynomials, lattices}

\begin{abstract}
Let $K$ be a number field, $\overline{\mathbb Q}$, or the field of rational functions on a smooth projective curve over a perfect field, and let $V$ be a subspace of $K^N$, $N \geq 2$. Let $Z_K$ be a union of varieties defined over $K$ such that $V \nsubseteq Z_K$. We prove the existence of a point of small height in $V \setminus Z_K$, providing an explicit upper bound on the height of such a point in terms of the height of $V$ and the degree of a hypersurface containing $Z_K$, where dependence on both is optimal. This generalizes and improves upon the results of  \cite{me:classical} and \cite{me:number}. As a part of our argument, we provide a basic extension of the function field version of Siegel's lemma \cite{thunder} to an inequality with inhomogeneous heights. As a corollary of the method, we derive an explicit lower bound for the number of algebraic integers of bounded height in a fixed number field.
\end{abstract}

\maketitle
\tableofcontents

\def\A{{\mathcal A}}
\def\AA{{\mathfrak A}}
\def\B{{\mathcal B}}
\def\C{{\mathcal C}}
\def\D{{\mathcal D}}
\def\E{{\mathcal E}}
\def\F{{\mathcal F}}
\def\x{{\mathcal H}}
\def\I{{\mathcal I}}
\def\J{{\mathcal J}}
\def\K{{\mathcal K}}
\def\kk{{\mathfrak K}}
\def\L{{\mathcal L}}
\def\M{{\mathcal M}}
\def\mm{{\mathfrak m}}
\def\MM{{\mathfrak M}}
\def\OO{{\mathfrak O}}
\def\R{{\mathcal R}}
\def\s{{\mathcal S}}
\def\V{{\mathcal V}}
\def\X{{\mathcal X}}
\def\Y{{\mathcal Y}}
\def\Z{{\mathcal Z}}
\def\H{{\mathcal H}}
\def\cee{{\mathbb C}}
\def\pee{{\mathbb P}}
\def\que{{\mathbb Q}}
\def\real{{\mathbb R}}
\def\zed{{\mathbb Z}}
\def\aaa{{\mathbb A}}
\def\ff{{\mathbb F}}
\def\kk{{\mathfrak K}}
\def\qbar{{\overline{\mathbb Q}}}
\def\kbar{{\overline{K}}}
\def\ybar{{\overline{Y}}}
\def\kkbar{{\overline{\mathfrak K}}}
\def\ubar{{\overline{U}}}
\def\eps{{\varepsilon}}
\def\ahat{{\hat \alpha}}
\def\bhat{{\hat \beta}}
\def\gt{{\tilde \gamma}}
\def\h{{\tfrac12}}
\def\dd{{\partial}}
\def\be{{\boldsymbol e}}
\def\bei{{\boldsymbol e_i}}
\def\bc{{\boldsymbol c}}
\def\bm{{\boldsymbol m}}
\def\bk{{\boldsymbol k}}
\def\bi{{\boldsymbol i}}
\def\bl{{\boldsymbol l}}
\def\bq{{\boldsymbol q}}
\def\bu{{\boldsymbol u}}
\def\bt{{\boldsymbol t}}
\def\bs{{\boldsymbol s}}
\def\bv{{\boldsymbol v}}
\def\bw{{\boldsymbol w}}
\def\bx{{\boldsymbol x}}
\def\bX{{\boldsymbol X}}
\def\bz{{\boldsymbol z}}
\def\bwy{{\boldsymbol y}}
\def\bY{{\boldsymbol Y}}
\def\bL{{\boldsymbol L}}
\def\ba{{\boldsymbol\alpha}}
\def\bb{{\boldsymbol\beta}}
\def\bet{{\boldsymbol\eta}}
\def\bxi{{\boldsymbol\xi}}
\def\bo{{\boldsymbol 0}}
\def\bol{{\boldkey 1}_L}
\def\ep{\varepsilon}
\def\p{\boldsymbol\varphi}
\def\q{\boldsymbol\psi}
\def\rank{\operatorname{rank}}
\def\aut{\operatorname{Aut}}
\def\lcm{\operatorname{lcm}}
\def\sgn{\operatorname{sgn}}
\def\spn{\operatorname{span}}
\def\md{\operatorname{mod}}
\def\Norm{\operatorname{Norm}}
\def\dim{\operatorname{dim}}
\def\det{\operatorname{det}}
\def\Vol{\operatorname{Vol}}
\def\rk{\operatorname{rk}}
\def\ord{\operatorname{ord}}
\def\ker{\operatorname{ker}}
\def\div{\operatorname{div}}
\def\Gal{\operatorname{Gal}}

\section{Introduction}
\label{intro}

A fundamental problem of Diophantine geometry is determining the existence of rational points in an algebraic variety over a fixed field. One standard approach to this problem is through {\it search bounds} on the {\it height} of points in question. Specifically, suppose we were able to prove that whenever there exists a point with coordinates in a field $K$, lying in some subset $U$ of our variety, then there must exist such a point in $U$ with height bounded above by an explicit constant $B$. A key property of height functions over many fields $K$ of arithmetic interest is that the set of all points over $K$ with height $\leq B$ is finite. Then, in order to determine if $U$ in fact contains a rational point over $K$, it is sufficient to test only a finite set of points. A classical instance of such an approach is the celebrated Siegel's lemma, which originated in the work of Thue \cite{thue} and Siegel \cite{siegel} in transcendental number theory. In this paper we consider the problem of finding points of small height in a vector space outside of a union of a finite collection of varieties, which can be viewed as an extension of Siegel's lemma. This generalizes previous results of the author~\cite{me:classical},~\cite{me:number}.

Siegel's lemma is a fundamental principle in Diophantine approximations and transcendental number theory, which is a statement about the existence of points of small height in a vector space over a global field. This is an important instance of a general problem of finding rational points on varieties. We use height functions, which are essential in Diophantine geometry, as a measure of arithmetic complexity; we denote the homogeneous height on vectors by $H$, the inhomogeneous height by $h$, the height of a vector space by $\H$, and will define precisely our choice of heights below. Throughout this paper, we will write $K$ for either a number field, a function field (i.e. a finite algebraic extension of the field of rational functions in one variable over an arbitrary field), or the algebraic closure of one or the other. The following general version of Siegel's lemma was proved in~\cite{vaaler:siegel} if $K$ is a number field, in \cite{thunder} if $K$ is a function field, and in \cite{absolute:siegel} if $K$ is the algebraic closure of one or the other (see also \cite{absolute:siegel1} for an improved constant).

\begin{thm} [\cite{vaaler:siegel}, \cite{thunder}, \cite{absolute:siegel}, \cite{absolute:siegel1}] \label{siegel} Let $K$ be a number field, a function field, or the algebraic closure of one or the other. Let $V \subseteq K^N$ be an $L$-dimensional subspace, $1 \leq L \leq N$. Then there exists a basis $\bv_1,...,\bv_L$ for $V$ over $K$ such that
\begin{equation}
\label{siegel_bound}
\prod_{i=1}^L H(\bv_i) \leq C_K(L) \H(V),
\end{equation}
where $C_K(L)$ is a field constant defined by equation (\ref{CKL}) in section~\ref{notation} below. In fact, if $K$ is a number field or $\qbar$, then even more is true: there exists such a basis with
\begin{equation}
\label{siegel_bound1}
\prod_{i=1}^L H(\bv_i) \leq \prod_{i=1}^L h(\bv_i) \leq C_K(L) \H(V).
\end{equation}
\end{thm}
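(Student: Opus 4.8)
The statement collects three results established by rather different techniques, so the natural plan is to treat the three kinds of $K$ separately, in each case feeding a lattice (or its function-field analogue) attached to $V$ into an appropriate ``geometry of numbers'' principle. The common first move, in the number-field and $\qbar$ cases, is a reduction to a module: $\H(V)$ is the height of the vector of Grassmann (Pl\"ucker) coordinates of $V$, so choosing a good basis of $V$ over $K$ is tantamount to choosing a good basis of the finitely generated module $\Lambda = V \cap \mathcal{O}_K^N$ (suitably twisted by a fractional ideal so as to represent $V$ faithfully), whose covolume is controlled by $\H(V)$ up to the discriminant factor hidden in $C_K(L)$.

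\emph{The number field case.} Here I would use the adelic geometry of numbers of McFeat and Bombieri--Vaaler. One forms the adelic convex body in $(K\otimes_{\mathbb{Q}}\mathbb{R})^L\times\prod_{v\nmid\infty}K_v^L$ cut out by an archimedean size constraint together with $\mathcal{O}_K$-integrality at the finite places, normalized so that its adelic volume is comparable to the reciprocal of $C_K(L)\,\H(V)$. The adelic analogue of Minkowski's second theorem then bounds the product $\lambda_1\cdots\lambda_L$ of the successive minima of $\Lambda$ with respect to this body by an explicit multiple of the reciprocal volume, hence by $C_K(L)\,\H(V)$. The remaining, genuinely delicate, step is to pass from vectors realizing the $\lambda_i$ to an actual $K$-basis $\bv_1,\dots,\bv_L$ of $V$ with $\prod_i H(\bv_i)$ still $\le C_K(L)\,\H(V)$; this is the classical ``basis extraction'' lemma, quantifying the fact that near-minimal vectors can be completed to a basis of bounded index. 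To upgrade \eqref{siegel_bound} to the inhomogeneous bound \eqref{siegel_bound1} over a number field or $\qbar$, I would, following Roy and Thunder, replace the Euclidean balls at the archimedean places by $\ell^\infty$-boxes, so that the controlled minima are exactly those with respect to $h$; since $H(\bx)\le h(\bx)$ always, this is strictly stronger, and the only cost is a worse archimedean constant already absorbed into $C_K(L)$.

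\emph{The function field case.} Following Thunder, I would replace the geometry of numbers by the Riemann--Roch theorem on the smooth projective curve with function field $K$. Fix a divisor $D$ of large degree and consider the finite-dimensional space of tuples in $V$ whose pole divisor at every place is bounded by $D$; Riemann--Roch bounds its dimension from below by $L(\deg D - g + 1)$ minus codimension terms coming from $V$, so choosing $\deg D$ just large enough to make this positive produces a nonzero element of $V$ of small height. Iterating on the successive quotients (equivalently, working with a nested chain of such divisors) yields a full basis, and carrying the genus $g$ through the Riemann--Roch estimates produces exactly the constant $C_K(L)$. There is no inhomogeneous improvement here, because over a function field $H$ and $h$ essentially coincide --- there is no archimedean place to exploit --- which is why \eqref{siegel_bound1} is claimed only in the number-field and $\qbar$ cases.

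\emph{The absolute case and the main obstacle.} For $K=\qbar$ (or the algebraic closure of a function field), I would descend to a finite subextension $K_0$ over which $V$ is defined, apply the case already proved, and then remove the dependence of the constant on $[K_0:\mathbb{Q}]$. Following Roy--Thunder and Zhang, this is done by an averaging/limiting argument: heights are absolute, and by passing to larger and larger fields and replacing Minkowski's convex body theorem --- whose constant degrades with the degree --- by a Minkowski--Hlawka-type existence statement, one shows the normalized constant converges to the field-independent value in $C_{\qbar}(L)$. The recurring difficulty throughout is the passage from successive minima to a genuine basis with controlled product of heights; in the absolute case the harder point is precisely this uniformity of the constant in the field, which is what forces the switch away from Minkowski's second theorem.
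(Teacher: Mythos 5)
This theorem is not proved in the paper at all: it is quoted verbatim from the literature, with the four citations (\cite{vaaler:siegel} for number fields, \cite{thunder} for function fields, \cite{absolute:siegel} and \cite{absolute:siegel1} for the algebraic closures) standing in for the proof. So there is no proof of the author's to compare yours against; the only related argument the paper actually carries out is the proof of Theorem \ref{siegel_fi}, the inhomogeneous function-field variant, which is a different statement. Judged on its own terms, your outline does correctly identify the methods of the cited sources --- adelic Minkowski--style geometry of numbers for number fields, Riemann--Roch on the curve for function fields, and the Roy--Thunder/Zhang limiting argument over increasing subfields for the absolute case --- but it is a roadmap rather than a proof: every substantive step (the adelic second theorem with its explicit constant, the Riemann--Roch dimension count that produces $C_K(L)$, the field-uniform averaging) is invoked by name and not executed, so nothing is actually established beyond what the citations already assert.

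One substantive correction to the step you single out as ``genuinely delicate.'' The theorem asks for a basis of $V$ \emph{over $K$}, i.e.\ a $K$-vector-space basis, not a basis of the $\mathcal{O}_K$-module $V \cap \mathcal{O}_K^N$. Any $L$ vectors of $V$ that are linearly independent over $K$ automatically form such a basis, and the adelic successive-minima vectors are linearly independent by construction; so no ``basis extraction'' or bounded-index argument is needed here (it would be if one wanted module generators). Conversely, the transition from \eqref{siegel_bound} to \eqref{siegel_bound1}, which you route through a change of archimedean gauge body, is handled in the sources by the simpler observation that the minimizing vectors can be taken with $\mathcal{O}_K$-integral, indeed primitive, coordinates, for which $h$ and $H$ differ by a factor already absorbed into $C_K(L)$ --- this is what the author means by calling \eqref{siegel_bound1} a ``fairly direct corollary'' of \eqref{siegel_bound} over number fields and $\qbar$.
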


\noindent
The transition from projective height $H$ to inhomogeneous height $h$ in Theorem \ref{siegel} is quite straightforward over number fields (in other words, (\ref{siegel_bound1}) is a fairly direct corollary of (\ref{siegel_bound}) in the number field case and over $\qbar$). In section~\ref{inh_siegel} we prove the following function field analogue of (\ref{siegel_bound1}), which is one of the key tools we use to prove our main result, Theorem \ref{general_main}.

\begin{thm} \label{siegel_fi} Let $\kk_0$ be any perfect field and let $Y$ be a curve of genus $g$ over $\kk_0$. Let $K=\kk_0(Y)$ be the field of rational functions on $Y$ over $\kk_0$, and let $V \subseteq K^N$ be an $L$-dimensional subspace, $1 \leq L \leq N$. Then there exists a basis $\bu_1,...,\bu_L$ for $V$ over $K$ such that
\begin{equation}
\label{siegel_bound2}
\prod_{i=1}^L H(\bu_i) \leq \prod_{i=1}^L h(\bu_i) \leq \E_K(L) C_K(L) \H(V).
\end{equation}
where $\E_K(L)$ is as in (\ref{EKL}) and $C_K(L)$ is as in (\ref{CKL}).
\end{thm}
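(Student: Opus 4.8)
The plan is to deduce Theorem~\ref{siegel_fi} from the homogeneous function field Siegel's lemma, inequality~(\ref{siegel_bound}) of Theorem~\ref{siegel}, by rescaling the vectors of a Siegel basis so that the projective height $H$ is replaced by the inhomogeneous height $h$. Recall that $H(\bx)\le h(\bx)$ for every $\bx\in K^{N}\setminus\{\bo\}$, with $h(\bx)=\prod_v\max\{1,|x_1|_v,\dots,|x_N|_v\}$ and $H(\bx)=\prod_v\max_i|x_i|_v$ over the places $v$ of $K=\kk_0(Y)$, all of which are non-archimedean. The crucial elementary observation is that if some coordinate $x_j$ equals $1$, then $|x_j|_v=1$ already lies inside each local maximum, so $\max\{1,|x_1|_v,\dots,|x_N|_v\}=\max_i|x_i|_v$ for every $v$, and therefore $h(\bx)=H(\bx)$.

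The proof then runs as follows. Apply Theorem~\ref{siegel} over $K$ to get a basis $\bv_1,\dots,\bv_L$ of $V$ with $\prod_{i=1}^{L}H(\bv_i)\le C_K(L)\,\H(V)$. For each $i$ fix an index $j_i$ with $v_{i,j_i}\neq 0$ and put $\bu_i=v_{i,j_i}^{-1}\bv_i$: then $\bu_1,\dots,\bu_L$ is again a basis of $V$, $H(\bu_i)=H(\bv_i)$ by scaling invariance of $H$, and $h(\bu_i)=H(\bu_i)=H(\bv_i)$ by the observation above. Taking products gives
\[
\prod_{i=1}^{L}H(\bu_i)\le\prod_{i=1}^{L}h(\bu_i)=\prod_{i=1}^{L}H(\bv_i)\le C_K(L)\,\H(V),
\]
which is~(\ref{siegel_bound2}); in this form one may take the constant $\E_K(L)$ of~(\ref{EKL}) to be $1$, but it is convenient to allow a larger explicit value coming from the more hands-on argument sketched next.

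For completeness I would also spell out the content-divisor picture underlying this, since it is the version that stays explicit under any conventions. For $\bx\in K^N\setminus\{\bo\}$ set $E(\bx)=\sum_v(\min_i\ord_v(x_i))\,[v]$; the product formula gives $\log_q H(\bx)=-\deg E(\bx)\ge 0$ and $\log_q h(\bx)=-\deg E(\bx)_{-}$ (the negative part), while $E(\lambda\bx)=E(\bx)+\div(\lambda)$ for $\lambda\in K^{\times}$. To make $h(\lambda\bx)$ small one chooses $\lambda$ so that the positive part of $E(\bx)+\div(\lambda)$ has small degree: with $F$ effective of degree $\max\{0,\,g-\log_q H(\bx)\}\le g$ (or of the least admissible degree, incurring a $Y$-dependent correction), one has $\deg(F-E(\bx))=\deg F+\log_q H(\bx)\ge g$, so Riemann--Roch yields $\dim\L(F-E(\bx))\ge 1$; any nonzero $\mu$ in $\L(F-E(\bx))$ with $\lambda=\mu^{-1}$ satisfies $E(\lambda\bx)\le F$, hence $h(\lambda\bx)\le q^{\deg F}H(\bx)\le q^{\,g}H(\bx)$ (using $H(\bx)\ge 1$). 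Applying this to each $\bv_i$ reproves~(\ref{siegel_bound2}) with the explicit value $\E_K(L)=q^{\,gL}$, up to the correction.

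The argument is short in either form; the only points requiring care are the interplay of the two heights at a single place --- the non-archimedean fact that an extra coordinate equal to $1$ leaves the local maximum unchanged --- and, for the Riemann--Roch version, the bookkeeping of the valuations on $\kk_0(Y)$: inserting the residue degrees $\deg v$ correctly, keeping everything uniform in whether $\kk_0$ is finite or infinite, and securing an effective divisor of the requisite small degree (which is exactly what can force the $Y$-dependent adjustment in $\E_K(L)$). Everything else reduces to Theorem~\ref{siegel}.
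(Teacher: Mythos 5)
Your first argument is correct, and it is both simpler and sharper than the paper's own proof. You observe that if $\bx\in K^N$ has some coordinate equal to $1$, then for every place $v$ the quantity $\max\{1,|x_1|_v,\dots,|x_N|_v\}$ coincides with $\max_i|x_i|_v$ (since $|1|_v=1$ is already among the terms), so $h(\bx)=H(\bx)$; hence scaling each Siegel basis vector $\bv_i$ by $v_{i,j_i}^{-1}$ produces a basis $\bu_1,\dots,\bu_L$ with $\prod_i h(\bu_i)=\prod_i H(\bv_i)\le C_K(L)\H(V)$. This establishes (\ref{siegel_bound2}) with $\E_K(L)$ replaced by $1$, which is stronger than what the theorem asserts since $\E_K(L)\ge1$. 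Note also that this scaling argument is not specific to function fields --- it works verbatim over number fields and $\qbar$, which is precisely the ``straightforward'' step the paper alludes to after Theorem \ref{siegel}.

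The paper takes a genuinely different and more roundabout route: starting from the Siegel basis $\bx_1,\dots,\bx_L$, it forms the divisor $\div(\bx_i)$ over $\overline{\kk_0}$, selects a place $w$ of minimal degree $\dd$, and invokes Riemann--Roch to produce $f_i\in K$ making $\div(\bx_i)+gw+(f_i)$ effective; it then sets $\bu_i=\bx_i/f_i$ and obtains $h(\bu_i)\le e^{\dd g/d}H(\bx_i)$. This forces all local heights of $\bu_i$ to be $\ge1$ except possibly at $w$, where a loss of $e^{\dd g}$ is absorbed. The price is the factor $\E_K(L)=e^{\dd gL/d}$, which your argument shows is unnecessary. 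Your second sketch (the content-divisor / Riemann--Roch version) is in fact the paper's approach in outline; your bookkeeping there is looser --- you gloss over the need for an effective divisor of the requisite small degree, which is exactly where the paper's $\dd$ appears, and your use of $\log_q$ is only meaningful when $\kk_0$ is finite --- but since the first argument already settles the theorem with a better constant, these details are moot. It is worth noting explicitly in any revision that $\E_K(L)$ can be dropped, since that factor propagates into the bound (\ref{gen_bnd_1}) of Theorem \ref{general_main} via (\ref{zero_bnd1}), so your observation improves the main result as well.
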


\begin{rem} \label{curve_rem} In Theorem \ref{siegel_fi}, as well as throughout this paper, all curves are always assumed to be smooth projective curves.
\end{rem}
\smallskip

An immediate consequence of Theorem \ref{siegel} is the existence of a nonzero point $\bv_1 \in V$ such that
\begin{equation}
\label{siegel_bound_1}
H(\bv_1) \leq \left( C_K(L) \H(V) \right)^{1/L}.
\end{equation}
The bounds of (\ref{siegel_bound}) and (\ref{siegel_bound_1}) are sharp in the sense that the exponents on $\H(V)$ are smallest possible. For many applications it is also important to have versions of Siegel's lemma with some additional algebraic conditions. One such example is the so called Faltings' version of Siegel's lemma, which guarantees the existence of a point of bounded norm in a vector space $V \subseteq \real^N$  outside of a subspace $U \subsetneq V$ (see \cite{faltings}, \cite{faltings:siegel}, and~\cite{faltings:siegel_1}). In \cite{me:classical} and \cite{me:number} I considered a more general related problem. Specifically, using the notation of Theorem \ref{siegel} in the case when $K$ is a number field, let $M \in \zed_{>0}$ and let $U_1,...,U_M$ be subspaces of $K^N$  such that $V \nsubseteq \bigcup_{i=1}^M U_i$. Then we can prove the existence of a non-zero point of small height in $V \setminus \bigcup_{i=1}^M U_i$ providing an explicit upper bound on the height of such a point. In particular, the main result of \cite{me:number} is the following.

\begin{thm} [\cite{me:number}] \label{numberfield:main} Let $K$ be a number field of degree $d$ with discriminant $\D_K$. Let $N \geq 2$ be an integer, $l=\left[ \frac{N}{2} \right]$, and let $V$ be a subspace of $K^N$ of dimension $L$, $1 \leq L \leq N$. Let $1 \leq s < L$ be an integer, and let $U_1,...,U_M$ be nonzero subspaces of $K^N$ with $\max_{1 \leq i \leq M} \{ \dim_K(U_i) \} \leq s$. There exists a point $\bx \in V \setminus \bigcup_{i=1}^M U_i$ such that
\begin{equation}
\label{numberfield:bound}
H(\bx) \leq B_K(N,L,s) \H(V)^d \left\{ \left( \sum_{i=1}^M \frac{1}{\H(U_i)^d} \right)^{\frac{1}{(L-s)d}} + M^{\frac{1}{(L-s)d+1}} \right\},
\end{equation}
where
\begin{equation}
\label{main:constant}
B_K(N,L,s) = 2^{L(d+3)} |\D_K|^{\frac{L}{2}} \left( (Ld)^L \binom{Nd}{ld}^{\frac{1}{2d}} \right)^{\frac{1}{L-s}}.
\end{equation}
\end{thm}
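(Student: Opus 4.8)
The plan is to reduce the statement to a lattice-point count and then execute that count. First, apply Theorem~\ref{siegel} to $V$ to get a basis $\bv_1,\dots,\bv_L$ with $\prod_{j=1}^L h(\bv_j)\le C_K(L)\H(V)$, and look for the point in the form $\bx=\sum_{j=1}^L t_j\bv_j$; equivalently, work with the lattice $\Lambda_V = V\cap\OO_K^N$, a rank-$Ld$ lattice under the Minkowski embedding whose covolume and last successive minimum are controlled, via Theorem~\ref{siegel} and the Bombieri--Vaaler determinant computation, in terms of $|\D_K|$ and $\H(V)$ (this is the source of the factors $|\D_K|^{L/2}$, $\binom{Nd}{ld}^{1/(2d)}$ and the power of $\H(V)$ in~(\ref{main:constant})). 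The constraint $\bx\in U_i$ restricts $\bx$ to the sublattice $\Lambda_V\cap U_i\subseteq U_i\cap\OO_K^N$, of rank $r_i d$ with $r_i=\dim_K(V\cap U_i)\le s<L$; since $r_i<L$ the subspace $V\cap U_i$ is proper in $V$, so the hypothesis $V\nsubseteq\bigcup_i U_i$ is automatic here (a vector space over an infinite field is never a finite union of proper subspaces). What remains is to find a nonzero $\bx\in\Lambda_V$ of small norm lying in none of the $U_i$; for such an $\bx$, $H(\bx)$ is bounded by $\|\bx\|$ up to the field's normalization, so it suffices to control $\|\bx\|$.

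Now run the count. For $R$ larger than the last successive minimum of $\Lambda_V$ (bounded through the chosen basis), the number of points of $\Lambda_V$ in the ball $B_R$ is at least a positive constant times $R^{Ld}$ divided by the covolume of $\Lambda_V$. For each $i$, the number of points of $\Lambda_V\cap U_i$ in $B_R$ is at most the number of points of $U_i\cap\OO_K^N$ in $B_R$, which is at most a constant times $R^{r_i d}/(|\D_K|^{r_i/2}\H(U_i)^d)$ once $R$ exceeds the last successive minimum of $U_i\cap\OO_K^N$ --- itself $\ll C_K(\dim U_i)\H(U_i)$ --- and at most a constant times $R^{r_i d}$ unconditionally. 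Choosing $R$ so that $\#(\Lambda_V\cap B_R)>\sum_{i=1}^M\#\bigl((\Lambda_V\cap U_i)\cap B_R\bigr)$ produces the desired $\bx$; solving this for $R$, after splitting the index set according to whether $\H(U_i)$ is below or above the prevailing counting scale --- the former class contributing via the $1/\H(U_i)^d$ estimates, the latter via the crude ones --- yields the two summands $\bigl(\sum_i\H(U_i)^{-d}\bigr)^{1/((L-s)d)}$ and $M^{1/((L-s)d+1)}$ in~(\ref{numberfield:bound}). Combining with the floor coming from the last successive minimum of $\Lambda_V$, and collecting $|\D_K|^{L/2}$, the binomial factor, the powers of $2$ and the power of $\H(V)$, gives the bound with $B_K(N,L,s)$ as in~(\ref{main:constant}).

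The main obstacle is this counting and optimization step: one must make the lower bound for $\#(\Lambda_V\cap B_R)$ and the upper bounds for the $\#\bigl((\Lambda_V\cap U_i)\cap B_R\bigr)$ simultaneously effective on a common range of $R$, with all error and boundary terms explicit, and then balance the split of the $U_i$ by height so that neither resulting contribution is wasteful --- it is this balancing that sharpens the naive exponent $1/((L-s)d)$ on $M$ to $1/((L-s)d+1)$ while also extracting the $\sum_i\H(U_i)^{-d}$ term, and it is the bookkeeping of the successive-minima floor and of the passage $\bt\mapsto\bx$ that pins down the exact constant $B_K(N,L,s)$ and the power $\H(V)^d$. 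Subsidiary inputs, each requiring care to keep the constants in the stated shape, are the covolume and successive-minima estimates for $\Lambda_V$ and for the sublattices $U_i\cap\OO_K^N$ (where Theorem~\ref{siegel} and the Bombieri--Vaaler computation enter), the elementary bound relating $H(\bx)$ to $\|\bx\|$, and a ball-counting lemma giving a lower bound of the correct order for the number of lattice points once the radius exceeds the last successive minimum.
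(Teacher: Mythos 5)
You should note first that the paper does not actually prove this theorem: it is quoted from the author's earlier work, and the introduction only describes its method (view $V\cap O_K^N$ and the $U_i\cap O_K^N$ as lattices in $\real^{Nd}$ under the canonical embedding and compare lattice-point counts in a large box). Your plan is exactly that method, so the strategy is the intended one, and your side remarks are sound: the hypothesis $V\nsubseteq\bigcup_i U_i$ is indeed automatic since $\dim U_i\le s<L$ and $K$ is infinite, and the exponent $d$ on $\H(V)$ together with the factor $|\D_K|^{L/2}\binom{Nd}{ld}^{1/(2d)}$ does originate in the covolume of $V\cap O_K^N$ via the Bombieri--Vaaler computation.

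The gap is that the proposal stops exactly where the theorem begins. The statement has no qualitative content --- the existence of some point of $V$ outside $\bigcup_i U_i$ is trivial --- so the theorem \emph{is} the explicit bound, and every step that produces that bound is left as a description of what ``remains'' or is labelled ``the main obstacle'': the explicit lower bound for $|\Lambda_V\cap C_R^{Nd}|$ on an explicit range of $R$ (note that $\Lambda_V$ has rank $Ld<Nd$, so the full-rank count of Lemma \ref{2.4.1} does not apply directly and one needs a statement in the spirit of Lemma \ref{2.4.3}, with the Grassmann-coordinate determinant tied to $\H(V)^d$); the explicit upper bounds for the counts in $\Lambda_V\cap U_i$; the splitting of the indices $i$ by the size of $\H(U_i)$ and the optimization in $R$ that is claimed to yield the two summands with exponents $\frac{1}{(L-s)d}$ and $\frac{1}{(L-s)d+1}$; and the bookkeeping that produces $B_K(N,L,s)$. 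None of this is carried out, and the sharpening of the exponent on $M$ from $\frac{1}{(L-s)d}$ to $\frac{1}{(L-s)d+1}$ in particular is asserted to follow from ``balancing'' with no indication of how the balance is struck. As it stands the proposal certifies the shape of the answer rather than deriving it; to count as a proof it must execute the counting lemmas, the covolume computation, and the optimization with all constants explicit.
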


\noindent
If $\bx_1,\dots,\bx_L$ is any basis for $V$, then it is well known (see for instance Lemma 4.7 of \cite{absolute:siegel}) that
\begin{equation}
\label{basis}
\prod_{i=1}^L H(\bx_i) \geq N^{-\frac{L}{2}} \H(V).
\end{equation}
Let $M=1$, and take $U_1$ to be a subspace of $V$ of dimension $L-1$ generated by the vectors corresponding to the first $L-1$ successive minima of $V$ with respect to an adelic unit cube - these are precisely the vectors $\bv_1,\dots,\bv_{L-1}$ in Theorem \ref{siegel}. Then the smallest vector in $V \setminus U_1$ will be $\bv_L$ of Theorem \ref{siegel}. If we choose $V$ so that the first $L-1$ successive minima of $V$ are equal to 1, then (\ref{basis}) implies that $H(\bv_L) \geq N^{-L/2} \H(V)$. This shows that the dependence on $\H(V)$ in the upper bound of Theorem \ref{numberfield:main} is sharp in the case $K=\que$, however it is natural to expect the exponent on $\H(V)$ to be equal to 1 over any number field.

The proof of Theorem \ref{numberfield:main} relies on a counting argument. Write $O_K$ for the ring of integers of $K$, and view modules $V \cap O_K$ and $U_i \cap O_K$ for all $1 \leq i \leq M$ as lattices in $\real^{Nd}$ under the canonical embedding of $K$ into $\real^d$. Then one can count points of $V \cap O_K$ and $\bigcup_{i=1}^M U_i \cap O_K$ in a cube of side-length $2R$ centered at the origin in $\real^{Nd}$, and make $R$ sufficiently large so that there exists a point $\bx \in V \setminus \bigcup_{i=1}^M U_i$; now it is not difficult to estimate the height of this point. However, this argument does not extend to algebraically closed fields, since $\kbar$ does not embed into a finite-dimensional Euclidean space.

The main goal of this paper is to produce a generalization of Theorem \ref{numberfield:main} with optimal dependence on $\H(V)$ which holds just as well over $\qbar$ and over function fields. Let us say that $K$ is an {\it admissible field} if it is a number field, $\qbar$, or the field of rational functions on a smooth projective curve over a perfect field. We can now state our main result.

\begin{thm} \label{general_main} Let $K$ be an admissible field. Let $N \geq 2$ be an integer, and let $V$ be an $L$-dimensional subspace of $K^N$, $1 \leq L \leq N$. Let $J \geq 1$ be an integer. For each $1 \leq i \leq J$, let $k_i \geq 1$ be an integer and let
$$P_{i1}(X_1,\dots,X_N),\dots,P_{ik_i}(X_1,\dots,X_N)$$
be polynomials of respective degrees $m_{i1},\dots,m_{ik_i} \geq 1$, and define
\begin{equation}
\label{def_M}
M_i = \max_{1 \leq j \leq k_i} m_{ij}\ \forall\ 1 \leq i \leq J,\ \ M = \sum_{i=1}^J M_i.
\end{equation}
Let
$$Z_K(P_{i1},\dots,P_{ik_i}) = \{ \bx \in K^N : P_{i1}(\bx) = \dots = P_{ik_i}(\bx) = 0 \},$$
and define $\Z_K = \bigcup_{i=1}^J Z_K(P_{i1},\dots,P_{ik_i})$. Suppose that $V \nsubseteq \Z_K$. Let
\begin{equation}
\label{delta}
\delta = \left\{ \begin{array}{ll}
1 & \mbox{if $K$ is a number field or $\qbar$} \\
0 & \mbox{otherwise}.
\end{array}
\right.
\end{equation}
Then there exists a point $\bx \in V \setminus \Z_K$ such that
\begin{equation}
\label{gen_bnd_1}
H(\bx) \leq h(\bx) \leq  L^{\delta} \E_K(L)^{1-\delta} A_K(L,M) C_K(L) \H(V),
\end{equation}
where $C_K(L)$ is as in (\ref{CKL}), $A_K(L,M)$ is as in (\ref{AKLM}), and $\E_K(L)$ is as in (\ref{EKL}).
\end{thm}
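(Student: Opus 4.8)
The plan is to reduce the problem to a finite union of \emph{subspaces} by a clever choice of linear forms, and then apply an appropriate version of Siegel's lemma to the resulting configuration. First I would observe that it suffices to find a point $\bx \in V$ avoiding a single hypersurface containing $\Z_K$: for each $i$, since $Z_K(P_{i1},\dots,P_{ik_i})$ is contained in the zero set of any single $P_{ij}$, say $P_{i1}$, and $V \nsubseteq \Z_K$ means $V \nsubseteq Z_K(P_{i1},\dots,P_{ik_i})$ for some $i$, one is tempted to work polynomial-by-polynomial; but to handle the whole union simultaneously I would instead consider the product $F = \prod_{i=1}^J P_{ij_i}$ for a suitable choice of indices $j_i$, a polynomial of degree at most $M = \sum_i M_i$, whose zero set contains $\Z_K$ and which does not vanish identically on $V$. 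So the core task becomes: given a polynomial $F$ of degree $\le M$ with $F|_V \not\equiv 0$, find $\bx \in V$ with $F(\bx) \ne 0$ and $h(\bx)$ bounded as in \eqref{gen_bnd_1}.

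For the core task I would restrict $F$ to $V$: fixing a Siegel basis $\bv_1,\dots,\bv_L$ for $V$ as in Theorem~\ref{siegel} (or Theorem~\ref{siegel_fi} in the function field case) and parametrizing $V$ by $K^L$, $F$ pulls back to a nonzero polynomial $G(T_1,\dots,T_L)$ of degree $\le M$. Now I want a point of small height in $K^L$ at which $G$ is nonzero; pulling it back through the basis and using the product bound \eqref{siegel_bound1}/\eqref{siegel_bound2} will contribute the factor $C_K(L)\H(V)$ (and $\E_K(L)^{1-\delta}$) to the final estimate, while the search bound for $G$ contributes the combinatorial factor $A_K(L,M)$. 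To find a small point where $G \ne 0$, I would use that a nonzero polynomial of degree $\le M$ in $L$ variables cannot vanish on the full grid $\{0,1,\dots,M\}^L$ (a Schwartz–Zippel / Combinatorial Nullstellensatz type statement), so some grid point works; over a number field or $\qbar$ one replaces $\{0,\dots,M\}$ by $M+1$ elements of $\mathcal{O}_K$ of controlled height (e.g.\ small rational integers, which accounts for the factor $L^\delta$ coming from $h$ of such a point in $K^L$), and over a function field one uses $M+1$ suitable constants or low-degree functions, whose inhomogeneous height is controlled by $\E_K(L)$. The height of the resulting $\bx = \sum_j t_j \bv_j$ is then bounded by $(\max_j h(t_j)) \prod_j h(\bv_j)$ up to the triangle-inequality constant absorbed into $A_K(L,M)$, via the standard inequality $h(\sum a_j \bw_j) \le (\text{something})\cdot \max|a_j|_v \cdot \prod h(\bw_j)$.

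The main obstacle, and where I expect the real work to lie, is twofold. First, making the bound in \eqref{gen_bnd_1} have the \emph{optimal} shape — linear in $\H(V)$ and with the right dependence $A_K(L,M)$ on the degree $M$ — requires being careful that the grid-point argument does not cost more than a multiplicative constant depending only on $L$ and $M$; a naive bound would multiply $\max_j h(\bv_j)$ rather than $\prod_j h(\bv_j)$, losing the sharp exponent, so one must genuinely use the full Siegel basis and the multiplicativity in \eqref{siegel_bound1}/\eqref{siegel_bound2}, which is precisely why the inhomogeneous refinement (Theorem~\ref{siegel_fi}) is needed in the function field case. Second, the function field case needs care at the archimedean-free level: there is no $\{0,1,\dots,M\}\subset K$ of bounded height in general, so one must exhibit $M+1$ elements of $K$ with pairwise distinct values and height controlled by the genus data, which is where the Riemann–Roch–type input hidden in $\E_K(L)$ enters and why the exponent $1-\delta$ appears. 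Once these two points are handled, assembling the pieces — choosing $F$, restricting to $V$, locating a small nonvanishing grid point, and estimating $h(\bx)$ — is routine, and the constants $A_K(L,M)$, $C_K(L)$, $\E_K(L)$ are read off from the bookkeeping.
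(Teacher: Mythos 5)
Your overall strategy is the paper's: reduce $\Z_K$ to the zero set of a single product polynomial $P=\prod_{i=1}^J P_{ij_i}$ of degree at most $M$ not vanishing identically on $V$, restrict it to $V$ through a Siegel basis $\bv_1,\dots,\bv_L$ satisfying \eqref{siegel_bound1}/\eqref{siegel_bound2}, locate a grid point $\bxi\in S_1^L$ where the restriction is nonzero via the Combinatorial Nullstellensatz, and bound $h\left(\sum_i\xi_i\bv_i\right)\leq L^{\delta}h(\bxi)\prod_i h(\bv_i)$. (Two small slips: $V\nsubseteq\Z_K$ gives $V\nsubseteq Z_K(P_{i1},\dots,P_{ik_i})$ for \emph{every} $i$, not ``for some $i$'' --- and it is the former that you need in order to choose each $j_i$; and the factor $L^{\delta}$ comes from the subadditivity of $h$ over a sum of $L$ terms in Lemma \ref{sum_height}, not from the height of the grid point.)

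The genuine gap is in the construction of $S_1$, which is where the constant $A_K(L,M)$ --- and the main quantitative content of the theorem --- comes from. Taking $S_1=\{0,1,\dots,M\}\subset\zed$ gives $h(\bxi)\leq M$ and hence only $A=M$ in place of $A_K(L,M)$; the theorem claims $A_K(L,M)=\left(M\sqrt{2^{r_1}|\D_K|}\right)^{1/d}$, which for $d\geq 2$ and large $M$ is much smaller, and the dependence $M^{1/d}$ is precisely what the paper highlights as optimal (Remark \ref{M_optimal}). To achieve it one must take $S_1=S_R(K)=\{x\in O_K:|x|_v\leq R\ \forall\ v\mid\infty\}$ with $R=\left(2^{r_1}|\D_K|\right)^{1/2d}M^{1/d}$ and prove $|S_R(K)|\geq M+1$ by counting points of the lattice $\sigma(O_K)\subset\real^d$ in a cube (Lemma \ref{count}); this counting mechanism, which occupies Sections 5 and 6 of the paper, is absent from your proposal. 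Likewise, over a function field of finite type $q\leq M$ the grid comes from counting in the function field lattice $\Lambda_Y\subset\zed^{n(K)}$ (Lemma \ref{count_f}), giving $A_K(L,M)=e^{R_K(M)}$, whereas if $\omega_K>M$, or $q>M$, or the constant field is infinite, or $K=\qbar$, one takes $M+1$ elements all of whose absolute values equal $1$, so that $h(\bxi)=1$ and $A_K(L,M)=1$; none of these case distinctions appear in your sketch. Finally, you attribute $\E_K(L)$ to the choice of grid elements: it in fact arises from Theorem \ref{siegel_fi}, i.e., from replacing $H(\bv_i)$ by $h(\bv_i)$ for the function field Siegel basis, and has nothing to do with $S_1$.
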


\noindent
In case $K$ is a function field over a finite field, the constant $A_K(L,M) C_K(L)$ in the upper bound of (\ref{gen_bnd_1}) can be slightly simplified: see Remark \ref{fnct_01} in section~\ref{notation} below. It should also be remarked that all the ingredients of our method (Lemma \ref{sum_height}, Theorem \ref{combin_null_1}, and Lemma \ref{count_f}) except for one (Theorem \ref{siegel_fi}) also work over the algebraic closure of a function field. Hence we state and prove our results in their most general form whenever possible. 

An immediate corollary of Theorem \ref{general_main} is the following extension of Theorem~\ref{numberfield:main}.

\begin{cor} \label{main_cor} Let $K$ be an admissible field. Let $N \geq 2$ be an integer, and let $V$ be an $L$-dimensional subspace of $K^N$, $1 \leq L \leq N$. Suppose that $M \geq 1$ is an integer and let $U_1,...,U_M$ be subspaces of $K^N$  such that $V \nsubseteq \bigcup_{i=1}^M U_i$. Then there exists a point $\bx \in V \setminus \bigcup_{i=1}^M U_i$ satisfying (\ref{gen_bnd_1}) above. In particular, in case $K$ is a number field,
\begin{equation}
\label{gen_bnd_2}
H(\bx) \leq h(\bx) \leq \sqrt{2} L |\D_K|^{\frac{L+1}{2d}} M^{\frac{1}{d}} \H(V).
\end{equation}
\end{cor}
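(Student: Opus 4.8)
The plan is to deduce Corollary~\ref{main_cor} directly from Theorem~\ref{general_main} by realizing each subspace $U_i$ as the common zero set of a collection of linear forms. First note that every $U_i$ is necessarily a proper subspace of $K^N$: if some $U_{i_0} = K^N$, then $V \subseteq U_{i_0} \subseteq \bigcup_{i=1}^M U_i$, contradicting $V \nsubseteq \bigcup_{i=1}^M U_i$. Consequently, setting $r_i = N - \dim_K(U_i) \geq 1$, we may choose a basis of the annihilator of $U_i$ in $(K^N)^*$ and write $U_i = \{\bx \in K^N : L_{i1}(\bx) = \dots = L_{ir_i}(\bx) = 0\}$ for suitable linear forms $L_{i1},\dots,L_{ir_i} \in K[X_1,\dots,X_N]$ (when $U_i = \{\bo\}$ one can take the forms $X_1,\dots,X_N$, or simply discard such a $U_i$, since $\bigcup_i U_i$ always contains $\bo$ and every point produced below will be nonzero anyway).

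Next, apply Theorem~\ref{general_main} with its parameter $J$ equal to the integer $M$ of the corollary, with $k_i = r_i$ and $P_{ij} = L_{ij}$ for $1 \leq j \leq r_i$. Then $Z_K(P_{i1},\dots,P_{ik_i}) = U_i$, so $\Z_K = \bigcup_{i=1}^M U_i$ and $V \nsubseteq \Z_K$ by hypothesis. Every $P_{ij}$ has degree $m_{ij} = 1$, whence $M_i = \max_{j} m_{ij} = 1$ for all $i$, and therefore the quantity $M = \sum_{i=1}^{J} M_i$ appearing in Theorem~\ref{general_main} coincides with the integer $M$ of the corollary (so there is no circularity in the notation). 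Theorem~\ref{general_main} then yields a point $\bx \in V \setminus \bigcup_{i=1}^M U_i$ with
\[
H(\bx) \leq h(\bx) \leq L^{\delta} \E_K(L)^{1-\delta} A_K(L,M) C_K(L) \H(V),
\]
which is exactly (\ref{gen_bnd_1}); since each $U_i$ is a subspace, $\bo \in \bigcup_i U_i$, so $\bx \neq \bo$. This establishes the first assertion.

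For the explicit inequality (\ref{gen_bnd_2}) in the number field case we have $\delta = 1$, so $L^{\delta}\E_K(L)^{1-\delta} = L$, and it remains only to substitute the number field expressions for the field constants $A_K(L,M)$ and $C_K(L)$ from Section~\ref{notation} and simplify. I expect this to amount to routine bookkeeping with the powers of $|\D_K|$, of $M$, and of the dimensional factors, terminating at $\sqrt{2}\, L\, |\D_K|^{(L+1)/(2d)} M^{1/d} \H(V)$.

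The one conceptual point worth flagging is the step in the first paragraph: a subspace must be presented as the \emph{intersection} of the hyperplanes corresponding to a spanning set of linear functionals vanishing on it — a single hyperplane would be far too large. Once that is done, the essential feature of Theorem~\ref{general_main}, namely that the complexity of the $i$-th variety enters only through the maximal degree $M_i$ of its defining equations and not through their number $k_i$, makes the reduction immediate, with no restriction whatsoever on the dimensions of the $U_i$ — in contrast with Theorem~\ref{numberfield:main}, which required $\max_i \dim_K(U_i) \leq s < L$.
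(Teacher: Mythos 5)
Your reduction is correct, but it routes through Theorem \ref{general_main} differently from the paper. You take $J=M$ varieties, each $U_i$ presented exactly as the common zero locus of $r_i = N-\dim_K(U_i)$ linear forms, so that each $M_i=1$ and the quantity $\sum_{i=1}^J M_i$ equals $M$. The paper instead takes $J=1$: it first chooses hyperplanes $\ubar_i \supseteq U_i$ with $V \nsubseteq \bigcup_{i=1}^M \ubar_i$, and applies the theorem to the single polynomial $P=\prod_{i=1}^M \L_i$ of degree $M$, where $\L_i$ cuts out $\ubar_i$. Both choices feed the same value $M$ into $A_K(L,M)$ and hence give identical bounds. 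Your version has the mild advantage of not having to justify the existence of suitable hyperplanes $\ubar_i$ (a point the paper asserts without proof); that selection is effectively delegated to the last paragraph of the proof of Theorem \ref{general_main}, which for each $i$ picks one defining form not vanishing identically on $V$ and multiplies them together --- at which point the two arguments converge. Your handling of the degenerate cases ($U_i=K^N$ impossible, $U_i=\{\bo\}$ harmless) is fine. The only thing left unfinished is the derivation of (\ref{gen_bnd_2}), which you correctly identify as bookkeeping: one uses $(2^{r_1})^{1/(2d)} \leq \sqrt{2}$ since $r_1 \leq d$, discards the factor $(2/\pi)^{r_2L/(2d)} \leq 1$, combines $|\D_K|^{1/(2d)}\cdot|\D_K|^{L/(2d)} = |\D_K|^{(L+1)/(2d)}$, and checks separately that the case $\omega_K > M$ (where $A_K(L,M)=1$) is also dominated by the stated bound.
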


\proof
Since $V \nsubseteq \bigcup_{i=1}^M U_i$, there exist subspaces $\ubar_1,\dots,\ubar_M$ of $K^N$ of dimension $N-1$ such that $U_i \subseteq \ubar_i$ for each $1 \leq i \leq M$, and $V \nsubseteq \bigcup_{i=1}^M \ubar_i$. Let
$$\L_1(X_1,\dots,X_N),\dots,\L_M(X_1,\dots,X_N) \in K[X_1,\dots,X_N]$$
be linear forms such that $\ubar_i = \{ \bx \in K^N : \L(\bx) = 0 \}$ for each $1 \leq i \leq M$, and define
$$P(X_1,\dots,X_N) = \prod_{i=1}^M \L_i(X_1,\dots,X_N) \in K[X_1,\dots,X_N].$$
Then $P$ is a polynomial of degree $M$, and $Z_K(P) = \bigcup_{i=1}^M \ubar_i$. Now the statement of the corollary follows from Theorem \ref{general_main}.
\endproof

\noindent
Notice that although the bound of Corollary \ref{main_cor} does not uniformly overrule Theorem \ref{numberfield:main} (in particular, there is no dependence on the heights of $U_i$ and the dependence on $M$ is not as good as in Theorem \ref{numberfield:main}), it exhibits the optimal exponent on $\H(V)$, better dependence on $N,L, d, \D_K$, is easier to use (compare (\ref{numberfield:bound}) with (\ref{gen_bnd_2})), and extends to $\qbar$ and over function fields, which is a serious advantage.

Our argument builds on the method of \cite{me:classical} and \cite{me:number}. We use a variation of the Combinatorial Nullstellensatz of N. Alon \cite{alon} along with a counting mechanism. Loosely speaking, the Combinatorial Nullstellensatz is the general principle that a polynomial of degree $M$ in $N$ variables cannot uniformly vanish on certain sets of points in $K^N$, which are built as rectangular grids of cardinality $\gg M^N$. A similar principle has been used in \cite{me:classical} and \cite{me:number}. The main novelty in our approach is that we restrict this principle to points in a fixed vector space, and then reduce the main counting argument in the number field case to points of $O_K$ viewed as a full-rank lattice in $\real^d$. In the function field case, we use a construction of FML lattices as in \cite{tsfasman}, pp. 578--583, combined with a lemma from \cite{me:classical} to produce a counting mechanism; we also discuss a possible alternative construction in Remark \ref{Thunder_JNT}. This, along with an application of Siegel's lemma with inhomogeneous heights (Theorems \ref{siegel} and \ref{siegel_fi}), allows us to produce a sharper estimate. The fact that Combinatorial Nullstellensatz applies over any field (or any sufficiently large subset of a field, for that matter) allows us to extend our results over $\kbar$. The dependence on $M$ in the number field case of Theorem \ref{general_main} is optimal in the sense that if $M^{1/d}$ is replaced by a smaller power of $M$ then the corresponding rectangular grid in Combinatorial Nullstellensatz is not sufficiently large, so that the polynomial in question may vanish identically on it (see Remark \ref{M_optimal} below for an actual example).
\smallskip

As a side product of the counting part of our method, we are also able to produce a uniform lower bound on the number of algebraic integers of bounded height in a number field $K$. The subject of counting {\it algebraic numbers} of bounded height has been started by the famous asymptotic formula of Schanuel \cite{schanuel}. Some explicit upper and lower bounds have also been produced later, for instance by Schmidt \cite{schm1}, \cite{schm2}. Recently a new sharp upper bound has been given by Loher and Masser \cite{loher:masser}. Here we can produce the following lower bound estimate for the number of {\it algebraic integers}.

\begin{cor} \label{lower_bnd} Let $K$ be a number field of degree $d$ over $\que$ with discriminant $\D_K$ and $r_1$ real embeddings. Let $O_K$ be its ring of integers. For all $R \geq \left( 2^{r_1} |D_K| \right)^{1/2}$,
\begin{equation}
\label{count_lower}
\left( 2^{r_1} |\D_K| \right)^{-1/2} R^d < \left| \{ x \in O_K\ :\ h(x) \leq R \} \right|.
\end{equation}
\end{cor}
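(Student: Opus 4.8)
The plan is to count algebraic integers of bounded height in $K$ by viewing $O_K$ as a full-rank lattice $\Lambda$ in $\real^d$ under the canonical (Minkowski) embedding $\sigma = (\sigma_1,\dots,\sigma_{r_1}, \sigma_{r_1+1},\dots) \colon K \hookrightarrow \real^{r_1} \times \cee^{r_2} \cong \real^d$, where $d = r_1 + 2r_2$. Recall that with the usual normalization the covolume of this lattice is $\det \Lambda = 2^{-r_2} |\D_K|^{1/2}$. The strategy is to show that a sufficiently large cube centered at the origin in $\real^d$ must contain many lattice points, and that every such lattice point corresponds to an algebraic integer whose inhomogeneous height $h$ is controlled by the side length of the cube.

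First I would recall the relation between the archimedean valuations of an algebraic integer $x \in O_K$ and the coordinates of $\sigma(x)$: for each infinite place $v$ one has $|x|_v = |\sigma_v(x)|$ (or its square, depending on normalization conventions), and there are no nonarchimedean contributions to $h(x)$ since $x$ is integral. Consequently, if all coordinates of $\sigma(x)$ lie in the real interval $[-T, T]$ (equivalently $\sigma(x)$ lies in the cube $C_T$ of side length $2T$ centered at $\bo$), then $h(x) \le \max\{1, T\}$ up to the appropriate exponent $1/d$ coming from the height normalization; the precise bookkeeping gives that $\sigma(x) \in C_T$ implies $h(x) \le T$ for $T \ge 1$ with the paper's normalization. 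So it suffices to lower-bound $|\Lambda \cap C_R|$. For this I would use the standard packing/covering estimate: since $C_T + v$ for $v$ ranging over $\Lambda \cap C_{R}$ covers (at least) the cube $C_{R}$ once $T$ is the covolume-scale, more cleanly, the number of lattice points of $\Lambda$ in a cube of side length $2R$ is at least $\Vol(C_R)/\det\Lambda$ minus a boundary term — but the cleanest route is the elementary fact that for a full-rank lattice $\Lambda \subseteq \real^d$ and any convex symmetric body $C$ with $\Vol(C) > 2^d \det \Lambda$, one has $|\Lambda \cap C| \ge \Vol(C)/(2^d \det\Lambda)$, or even the sharper statement that translates of a fundamental domain tile space. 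Taking $C = C_R$ with $\Vol(C_R) = (2R)^d$ gives $|\Lambda \cap C_R| \ge (2R)^d / (2^d \det \Lambda) = R^d / \det\Lambda = R^d / (2^{-r_2}|\D_K|^{1/2}) = 2^{r_2}|\D_K|^{-1/2} R^d$, and since $r_2 \ge 0$ one has $2^{r_2} \ge 1$; to get the stated constant $(2^{r_1}|\D_K|)^{-1/2}$ I would instead be slightly more careful, noting $2^{r_2} |\D_K|^{-1/2} \ge 2^{-r_1/2}|\D_K|^{-1/2} \cdot (\text{something}) $ — more precisely, since $d = r_1 + 2r_2$ one can write the bound as $(2^{r_1}|\D_K|)^{-1/2} \cdot 2^{(r_1+2r_2)/2 - r_1/2} R^d$ and the extra factor is $\ge 1$, which yields exactly (\ref{count_lower}) once $R^d$ dominates the boundary correction, i.e. once $R \ge (2^{r_1}|\D_K|)^{1/2}$.

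The main obstacle I anticipate is the boundary (lower-order) term in the lattice-point count: the clean inequality $|\Lambda \cap C_R| \ge \Vol(C_R)/\det\Lambda$ is not literally true for every $R$, only asymptotically, so one needs either a fundamental-domain tiling argument (translate a fixed fundamental parallelepiped $F$ by all $v \in \Lambda$ with $v + F \subseteq C_{R + \rho}$ where $\rho$ bounds the diameter of $F$, giving a count $\ge \Vol(C_{R-\rho})/\det\Lambda$) or a direct projection/pigeonhole argument. The hypothesis $R \ge (2^{r_1}|\D_K|)^{1/2}$ is precisely what is needed to absorb this $\rho$-correction into the strict inequality, and matching the constant will require tracking it carefully; this is exactly the kind of counting estimate that, per the introduction, is "a side product of the counting part of our method," so I expect the relevant inequality to already appear (in a more general form, e.g. in the proof of Theorem \ref{general_main} or in a cited lemma from \cite{me:classical}), and the proof here will consist of specializing that estimate to the lattice $O_K \subseteq \real^d$ and the cube, then simplifying the constants using $d = r_1 + 2r_2$ and $\det\Lambda = 2^{-r_2}|\D_K|^{1/2}$.
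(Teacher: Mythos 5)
Your overall strategy is the same as the paper's: embed $O_K$ into $\real^d$ as a full-rank lattice $\Lambda$ of determinant $2^{-r_2}|\D_K|^{1/2}$, lower-bound the number of lattice points in a box, and observe that integrality kills all finite places so the height of such a point is controlled by the box size. The paper packages the lattice count as Lemma \ref{count} (the set $S_R(K)$ of integers with all archimedean absolute values at most $R$), and the corollary is then the one-line observation $S_R(K)\subseteq\{x\in O_K: h(x)\le R\}$.

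One step of your write-up, as literally stated, is wrong: $\sigma(x)\in C_R^d$ does \emph{not} imply $h(x)\le R$ when $K$ has complex places, because at a complex place $|x|_{w_j}=\bigl(\tau_{j1}(x)^2+\tau_{j2}(x)^2\bigr)^{1/2}$ can be as large as $\sqrt{2}R$ when both coordinates are bounded by $R$; you would get only $h(x)\le 2^{r_2/d}R$. The correct containment (the one the paper uses, see (\ref{c1})) is $\Lambda\cap C_{R/\sqrt{2}}^d\subseteq\sigma(S_R(K))$, so you must count lattice points in the smaller cube. This costs a factor $2^{-d/2}$ in volume, and it is precisely this loss that turns your ``too good'' constant $2^{r_2}|\D_K|^{-1/2}$ into the stated $\left(2^{r_1}|\D_K|\right)^{-1/2}$, via $2^{-d/2+r_2}=2^{-r_1/2}$; so the slack you noticed is not free to discard but is exactly consumed by this correction. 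With that fix your count goes through: your van der Corput/tiling bound $|\Lambda\cap C|\ge \Vol(C)/(2^d\det\Lambda)$ for a symmetric convex body of volume at least $2^d\det\Lambda$ is valid and, applied to $C_{R/\sqrt 2}^d$, gives the right constant (one still has to check the strict inequality, e.g. by noting van der Corput actually yields $2\lceil V\rceil-1$ points). This is a genuinely different counting tool from the paper's: Lemma \ref{count} instead invokes Lemma \ref{2.4.1}, which requires an upper-triangular basis with diagonal entries bounded below, and that bound comes from the arithmetic fact (\ref{c3}) that a nonzero algebraic integer has some conjugate of Euclidean size at least $1/\sqrt2$. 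Your route avoids that arithmetic input entirely and is arguably cleaner for the lower bound alone, though it does not give the matching upper bound of Lemma \ref{count}.
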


\begin{rem} \label{loher_masser} For comparison, Loher and Masser \cite{loher:masser} produce the following upper bound on the number of {\it algebraic numbers} of bounded height in $K$ (using notation of Corollary \ref{lower_bnd} above):
$$\left| \{ x \in K\ :\ h(x) \leq R \} \right| \leq 31(d \log d) R^{2d}.$$
\end{rem}
\smallskip

The paper is structured as follows: in section~\ref{notation} we set notation, define heights, and recall Lemma \ref{sum_height}, which is a useful property of heights for our purposes; in section~\ref{inh_siegel} we prove a function field version of Siegel's lemma with inhomogeneous heights; in section~\ref{null} we prove Theorem \ref{combin_null_1}, a version of Combinatorial Nullstellensatz on a vector space required for our argument; in section~\ref{number_count} we prove Lemma \ref{count}, which is our main counting lemma in the number field case, and derive Corollary \ref{lower_bnd} from it; in section~\ref{funct_count} we prove Lemma \ref{count_f}, the counting lemma over a function field; in section~\ref{proof} we prove Theorem \ref{general_main}; in section~\ref{twisted} we discuss how our results can be extended to inequalities involving twisted height.
\smallskip

\begin{rem} \label{gaudron} The original version of this paper was posted on the arXiv in August 2008 (arXiv:0808.2476) and appeared in the Max-Planck-Institut f\"{u}r Mathematik preprint series (Bonn, Germany). The author was recently informed that E. Gaudron later published a paper \cite{gaudron}, in which he uses a different method to improve the bounds of the author's result in \cite{me:number}, quoted as Theorem \ref{numberfield:main} above. As in Theorem \ref{numberfield:main}, Gaudron's bound depends on heights of subspaces and only applies to the situation of a collection of linear subspaces of a vector space over a number field; it can be viewed as a different version of our Corollary \ref{main_cor}. It should be remarked that Gaudron's paper does not treat the general situation of Theorem \ref{general_main}, which is the main result of the present work.
\end{rem}
\bigskip

\section{Notation and heights}
\label{notation}

We start with some notation.  Throughout this paper, $K$ will either be a number field (finite extension of $\que$), a function field, or algebraic closure of one or the other; in fact, for the rest of this section, unless explicitly specified otherwise, we will assume that $K$ is either a number field or a function field, and will write $\kbar$ for its algebraic closure. By a function field we will always mean a finite algebraic extension of the field $\kk = \kk_0(t)$ of rational functions in one variable over a field $\kk_0$, where $\kk_0$ can be any field. When $K$ is a number field, clearly $K \subset \kbar = \qbar$; when $K$ is a function field, $K \subset \kbar = \kkbar$, the algebraic closure of $\kk$. In the number field case, we write $d = [K:\que]$ for the global degree of $K$ over $\que$; in the function field case, the global degree is $d = [K:\kk]$, and we also define the effective degree of $K$ over $\kk$ to be
$$\mm(K,\kk) = \frac{[K:\kk]}{[K_0:\kk_0]},$$
where $K_0$ is the algebraic closure of $\kk_0$ in $K$. If $K$ is a number field, we let $\D_K$ be its discriminant, $\omega_K$ the number of roots of unity in $K$, $r_1$ its number of real embeddings, and $r_2$ its number of conjugate pairs of complex embeddings, so $d=r_1+2r_2$. If $K$ is a function field, we will also write $g = g(K)$ for the genus of $K$, as defined by the Riemann-Roch theorem (see \cite{thunder} for details). We will also need to define $\dd = \dd(K) := \min_{v \in M(K)} \deg(v)$, and let
\begin{equation}
\label{EKL}
\E_K(L) = e^{\frac{\dd g L}{d}}.
\end{equation}
We will distinguish two cases: if $K$ is a function field, we say that it is of {\it finite type $q$} if its subfield of constants is a finite field $\ff_q$ for some prime power $q$, and we say that it is of {\it infinite type} if its subfield of constants is infinite. If $K$ is a function field of finite type $q$, then there exists a unique smooth projective curve $Y$ over $\ff_q$ such that $K = \ff_q(Y)$ is the field of rational functions on $Y$. In this case, we will write $n(K) = |Y(\ff_q)|$ for the number of points of $Y$ over $\ff_q$, and $h_K$ for the number of divisor classes of degree zero (which is precisely the cardinality of the Jacobian of $Y$ over $\ff_q$). We can now define the field constant $C_K(L)$, which appears in Theorems \ref{siegel} and \ref{general_main}:
\begin{equation}
\label{CKL}
C_K(L) = \left\{ \begin{array}{ll}
\left( \left( \frac{2}{\pi} \right)^{r_2} |\D_K| \right)^{\frac{L}{2d}} & \mbox{if $K$ is a number field} \\
\exp\left(\frac{(g(K)-1+\mm(K,\kk))L}{\mm(K,\kk)}\right) & \mbox{if $K$ is a function field} \\
e^{\frac{L(L-1)}{4}} + \eps & \mbox{if $K = \qbar$; here we can take any $\eps >0$} \\
1+\eps & \mbox{if $K = \overline{\kk}$; here we can take any $\eps >0$},
\end{array}
\right.
\end{equation}
and the constant $A_K(L,M)$, which appears in the statement of Theorem \ref{general_main}:
\begin{equation}
\label{AKLM}
A_K(L,M) = \left\{ \begin{array}{ll}
\left( M \sqrt{2^{r_1} |\D_K|} \right)^{\frac{1}{d}} & \mbox{if $K$ is a number field with $\omega_K \leq M$} \\
e^{R_K(M)}  & \mbox{if $K$ is a function field of finite type $q \leq M$} \\
1 & \mbox{otherwise,}
\end{array}
\right.
\end{equation}
for all integers $L,M \geq 1$, where for a function field $K$ of finite type $q \leq M$ we define
\begin{equation}
\label{RK}
R_K(M) = \frac{n(K)-1}{2} \left( (M-q+2) h_K \sqrt{n(K)} \right)^{\frac{1}{n(K)-1}} + h_K (n(K)-1) \sqrt{n(K)}.
\end{equation}

\begin{rem} \label{fnct_01} Let $Y$ be a smooth projective curve of genus $g$ over $\ff_q$. Then Hasse-Weil-Serre bound (see for instance Theorem 2.3.16 on p. 178 of \cite{tsfasman}) gives
\begin{equation}
\label{hasse_weil}
n(K) \leq q+1+g \left[ 2\sqrt{q} \right],
\end{equation}
where $[\ ]$ stands for the integer part function. In case $g=0$ we also have $h_K=1$, and if $g=1$ we have $h_K \leq n(K) \leq q+1+\left[ 2\sqrt{q} \right]$ (see (\ref{dltn}) below, which gives a bound on $h_K$ in terms of $n(K)$ and the genus). These observations may help to simplify the formula (\ref{RK}) for $R_K(M)$.
\end{rem}
\smallskip

Next we discuss absolute values on $K$. Let $M(K)$ be the set of places of $K$. For each place $v \in M(K)$ we write $K_v$ for the completion of $K$ at $v$ and let $d_v$ be the local degree of $K$ at $v$, which is $[K_v:\que_v]$ in the number field case, and $[K_v:\kk_v]$ in the function field case. In any case, for each place $u$ of the ground field, be it $\que$ or $\kk$, we have
\begin{equation}
\sum_{v \in M(K), v|u} d_v = d.
\end{equation}

If $K$ is a number field, then for each place $v \in M(K)$ we define the absolute value $|\ |_v$ to be the unique absolute value on $K_v$ that extends either the usual absolute value on $\real$ or $\cee$ if $v | \infty$, or the usual $p$-adic absolute value on $\que_p$ if $v|p$, where $p$ is a prime. For each finite place $v \in M(K)$, $v \nmid \infty$, we define the {\it local ring of $v$-adic integers} $\OO_v = \{ x \in K : |x|_v \leq 1 \}$, whose unique maximal ideal is $\MM_v =  \{ x \in K : |x|_v < 1 \}$. Then $O_K = \bigcap_{v \nmid \infty} \OO_v$. 

If $K$ is a function field, then all absolute values on $K$ are non-archimedean. For each $v \in M(K)$, let $\OO_v$ be the valuation ring of $v$ in $K_v$ and $\MM_v$ the unique maximal ideal in $\OO_v$. We choose the unique corresponding absolute value $|\ |_v$ such that:
\begin{trivlist}
\item (i) if $1/t \in \MM_v$, then $|t|_v = e$,
\item (ii) if an irreducible polynomial $p(t) \in \MM_v$, then $|p(t)|_v = e^{-\deg(p)}$.
\end{trivlist}

\noindent
In both cases, for each non-zero $a \in K$ the {\it product formula} reads
\begin{equation}
\label{product_formula}
\prod_{v \in M(K)} |a|^{d_v}_v = 1.
\end{equation} 

We extend absolute values to vectors by defining the local heights. For each $v \in M(K)$ define a local height $H_v$ on $K_v^N$ by
$$H_v(\bx) = \max_{1 \leq i \leq N} |x_i|^{d_v}_v,$$
for each $\bx \in K_v^N$. Also, for each $v | \infty$ we define another local height
$$\H_v(\bx) = \left( \sum_{i=1}^N |x_i|_v^2 \right)^{d_v/2}.$$
Then we can define two slightly different global height functions on $K^N$:
\begin{equation}
\label{global_heights}
H(\bx) = \left( \prod_{v \in M(K)} H_v(\bx) \right)^{1/d},\ \ \H(\bx) = \left( \prod_{v \nmid \infty} H_v(\bx) \times \prod_{v | \infty} \H_v(\bx) \right)^{1/d},
\end{equation}
for each $\bx \in K^N$. These height functions are {\it homogeneous}, in the sense that they are defined on projective space thanks to the product formula (\ref{product_formula}): $H(a \bx) = H(\bx)$ and $\H(a \bx) = \H(\bx)$ for any $\bx \in K^N$ and $0 \neq a \in K$. It is easy to see that
$$H(\bx) \leq \H(\bx) \leq \sqrt{N} H(\bx).$$
Notice that in case $K$ is a function field, $M(K)$ contains no archimedean places, and so $H(\bx) = \H(\bx)$ for all $\bx \in K^N$. We also define the {\it inhomogeneous} height
$$h(\bx) = H(1,\bx),$$
which generalizes Weil height on algebraic numbers: for each $\alpha \in K$, define
$$h(\alpha) = \prod_{v \in M(K)} \max \{ 1, |\alpha|_v \}^{d_v/d}.$$
Clearly, $h(\bx) \geq H(\bx)$ for each $\bx \in K^N$. All our inequalities will use heights $H$ and $h$ for vectors, however we use $\H$ to define the conventional Schmidt height on subspaces in the manner described below. This choice of heights coincides with \cite{vaaler:siegel} and \cite{me:number}.
\smallskip

We extend both heights $H$ and $\H$ to polynomials by viewing them as height functions of the coefficient vector of a given polynomial. We also define a height function on subspaces of $K^N$. Let $V \subseteq K^N$ be a subspace of dimension $L$, $1 \leq L \leq N$. Choose a basis $\bx_1,...,\bx_L$ for $V$, and write $X = (\bx_1\ ...\ \bx_L)$ for the corresponding $N \times L$ basis matrix. Then 
$$V = \{ X \bt : \bt \in K^L \}.$$
On the other hand, there exists an $(N-L) \times N$ matrix $A$ with entries in $K$ such that 
$$V = \{ \bx \in K^N : A \bx = 0 \}.$$
Let $\I$ be the collection of all subsets $I$ of $\{1,...,N\}$ of cardinality $L$. For each $I \in \I$ let $I'$ be its complement, i.e. $I' = \{1,...,N\} \setminus I$, and let $\I' = \{ I' : I \in \I\}$. Then 
$$|\I| = \binom{N}{L} = \binom{N}{N-L} = |\I'|.$$
For each $I \in \I$, write $X_I$ for the $L \times L$ submatrix of $X$ consisting of all those rows of $X$ which are indexed by $I$, and $_{I'} A$ for the $(N-L) \times (N-L)$ submatrix of $A$ consisting of all those columns of $A$ which are indexed by $I'$. By the duality principle of Brill-Gordan \cite{gordan:1} (also see Theorem 1 on p. 294 of \cite{hodge:pedoe}), there exists a non-zero constant $\gamma \in K$ such that
\begin{equation}
\label{duality}
\det (X_I) = (-1)^{\varepsilon(I')} \gamma \det (_{I'} A),
\end{equation}
where $\varepsilon(I') = \sum_{i \in I'} i$. Define the vectors of {\it Grassmann coordinates} of $X$ and $A$ respectively to be 
$$Gr(X) = (\det (X_I))_{I \in \I} \in K^{|I|},\ \ Gr(A) = (\det (_{I'} A))_{I' \in \I'} \in K^{|I'|},$$
and so by (\ref{duality}) and (\ref{product_formula})
$$\H(Gr(X)) = \H(Gr(A)).$$
Define the height of $V$ denoted by $\H(V)$ to be this common value. This definition is legitimate, since it does not depend on the choice of the basis for $V$. In particular, notice that if 
$$\L(X_1,...,X_N) = \sum_{i=1}^N q_i X_i \in K[X_1,...,X_N]$$
is a linear form with a non-zero coefficient vector $\bq \in K^N$, and $V = \{ \bx \in K^N : \L(\bx) = 0 \}$ is an $(N-1)$-dimensional subspace of $K^N$, then
\begin{equation}
\label{1.4}
\H(V) = \H(\L) = \H(\bq).
\end{equation}
An important observation is that due to the normalizing exponent $1/d$ in (\ref{global_heights}) all our heights are {\it absolute}, meaning that they do not depend on the number field or function field of definition, hence are well defined over~$\kbar$.
\smallskip

We will also need the following basic property of heights.

\begin{lem} \label{sum_height} For $\xi_1,...,\xi_L \in \kbar$ and $\bx_1,...,\bx_L \in \kbar^N$,
$$H \left( \sum_{i=1}^L \xi_i \bx_i \right) \leq h \left( \sum_{i=1}^L \xi_i \bx_i \right) \leq L^{\delta} h(\bxi) \prod_{i=1}^L h(\bx_i),$$
where $\bxi = (\xi_1,...,\xi_L) \in \kbar^L$, and $\delta$ is as in (\ref{delta}) above.
\end{lem}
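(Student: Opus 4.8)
The plan is to prove the inequality place by place, using the product-formula definitions of $H$, $h$, and $H_v$, and to keep careful track of the two distinct regimes: archimedean places (which only occur in the number field case, where $\delta=1$) and non-archimedean places (which covers all places in the function field case, so $\delta=0$ there). Write $\bx = \sum_{i=1}^L \xi_i \bx_i$, with $\bx_i = (x_{i1},\dots,x_{iN})$, so that the $n$-th coordinate of $\bx$ is $\sum_{i=1}^L \xi_i x_{in}$. Since $h(\bx) = H(1,\bx)$, I would work with the augmented vector $(1,\bx) = \sum_{i=1}^L \xi_i (\, \tfrac{1}{L}\,,\bx_i) $ when $\delta = 1$ — or more cleanly, just bound $H_v(1,\bx)$ directly at each place. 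The first inequality $H(\bx) \leq h(\bx)$ is immediate from the definition (adjoining a coordinate equal to $1$ can only increase each local maximum), so the content is the second inequality.

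The main step is the local estimate. Fix $v \in M(K)$. If $v$ is non-archimedean, the ultrametric inequality gives, for each coordinate $n$,
\[
\left| \sum_{i=1}^L \xi_i x_{in} \right|_v \leq \max_{1 \leq i \leq L} |\xi_i|_v \, |x_{in}|_v \leq \left( \max_{1 \leq i \leq L} \max\{1,|\xi_i|_v\} \right) \prod_{i=1}^L \max_{1 \leq n \leq N}\max\{1,|x_{in}|_v\},
\]
and the same bound holds trivially for the adjoined coordinate $1$; hence $H_v(1,\bx) \leq \left(\max_i \max\{1,|\xi_i|_v\}\right)^{d_v} \prod_{i=1}^L \left(\max_n \max\{1,|x_{in}|_v\}\right)^{d_v}$, and one checks $\max_i \max\{1,|\xi_i|_v\}^{d_v} \leq \prod_i \max\{1,|\xi_i|_v\}^{d_v}$ is not needed — in fact the single max suffices, which is exactly the source of the absent factor $L^{\delta}$ at non-archimedean places. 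If $v$ is archimedean ($\delta=1$ forced), the triangle inequality gives $\left|\sum_i \xi_i x_{in}\right|_v \leq \sum_i |\xi_i|_v |x_{in}|_v \leq L \max_i |\xi_i|_v \cdot \max_{i,n}|x_{in}|_v \leq L \, \max_i\max\{1,|\xi_i|_v\} \prod_i \max_n \max\{1,|x_{in}|_v\}$, yielding the extra factor $L^{d_v}$ at each infinite place.

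Now I would take the product over all $v \in M(K)$, raise to the power $1/d$, and use $\sum_{v|\infty} d_v = d$ (so $\prod_{v|\infty} L^{d_v/d} = L$ when $\delta=1$, and there are no such places when $\delta = 0$), together with the definitions
\[
h(\bxi) = \prod_{v} \max_{1\leq i \leq L}\max\{1,|\xi_i|_v\}^{d_v/d}, \qquad h(\bx_i) = \prod_v \max_{1 \leq n \leq N}\max\{1,|x_{in}|_v\}^{d_v/d}.
\]
This gives exactly $h(\bx) \leq L^{\delta} h(\bxi) \prod_{i=1}^L h(\bx_i)$. The only mild subtlety — and the step I'd expect to require the most care — is making sure the bound $h(\bxi) \geq \prod_v \max_i \max\{1,|\xi_i|_v\}^{d_v/d}$ is used correctly: at non-archimedean places only $\max_i$ appears (not $\prod_i$), and $h(\bxi)$ is precisely the height of the vector $\bxi$, which is $\prod_v \max_i\max\{1,|\xi_i|_v\}^{d_v/d}$ by definition of the homogeneous/inhomogeneous height of a vector — so this matches on the nose and no further loss is incurred. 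Since Combinatorial Nullstellensatz and all absolute-value manipulations work over $\kbar$, and heights are absolute, the statement holds over $\kbar$ verbatim; one just computes in a finite subextension containing all the relevant coordinates.
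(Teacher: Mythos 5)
Your proof is correct, and it is the standard place-by-place argument that the paper implicitly relies on: the lemma is stated there without proof as a ``basic property of heights,'' so there is no alternative argument in the paper to compare against. The essential points are all present and right: the ultrametric bound at non-archimedean places gives $H_v(1,\bx) \leq H_v(1,\bxi)\prod_i H_v(1,\bx_i)$ with no factor of $L$, the triangle inequality at archimedean places costs $L^{d_v}$, and $\sum_{v\mid\infty} d_v = d$ converts this into a single global factor $L$ exactly when $K$ is a number field or $\qbar$ (working in a finite subextension, as you note), i.e.\ when $\delta = 1$. Your use of $\prod_i \max_n\max\{1,|x_{in}|_v\}$ rather than $\max_i$ on the right of the local bound is the correct move, since the index attaining the maximum varies with $v$ and each factor is $\geq 1$.

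One harmless slip: the aside $(1,\bx) = \sum_{i=1}^L \xi_i(\tfrac1L,\bx_i)$ is false in general (it would require $\sum_i \xi_i = L$), but you immediately discard it in favor of bounding $H_v(1,\bx)$ directly, so nothing in the actual argument depends on it. I would simply delete that sentence.
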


We are now ready to proceed.
\bigskip

\section{Siegel's lemma over a function field}
\label{inh_siegel}

In this section we produce a version of Siegel's lemma with inhomogeneous heights over fields of rational functions of smooth projective curves, which is a function field analogue of (\ref{siegel_bound1}). Let all notation be as in section~\ref{notation} above. We now prove Theorem \ref{siegel_fi}.

\begin{proof}
[Proof of Theorem \ref{siegel_fi}]
Let $Y$ be a smooth projective curve of genus $g$ over a perfect field $\kk_0$ and $K=\kk_0(Y)$, then $g(K)=g$. Let $\bx_1,\dots,\bx_L$ be a basis for $V$ over $K$ satisfying (\ref{siegel_bound}) of Theorem \ref{siegel}. 

Fix $1 \leq i \leq L$, and for each $1 \leq j \leq N$ and $v \in M(K)$, let $\ord_v (x_{ij})$ be the order of $x_{ij}$ at the place $v$; clearly, for each $1 \leq i \leq L$, $1 \leq j \leq N$,  $\ord_v (x_{ij}) \neq 0$ at only finitely many places $v \in M(K)$. Then let $v_1,\dots,v_s$ be the places of $K$ at which $\ord_v (x_{ij}) \neq 0$ for some $1 \leq j \leq N$. As in section~\ref{notation}, for each $1 \leq m \leq s$
$$\OO_{v_m} = \{ x \in K : \ord_{v_m} (x) \leq 0 \}$$
is the valuation ring at $v_m$ with the unique maximal ideal 
$$\MM_{v_m} = \{ x \in K : \ord_{v_m} (x) < 0 \},$$
and let us write $\kk_0(v_m)$ for the residue field $\OO_{v_m}/\MM_{v_m}$. Clearly $\kk_0^* \subseteq \OO_{v_m} \setminus \MM_{v_m}$, so $\kk_0(v_m)$ is a field extension of $\kk_0$. By Exercise 2.3.1 on p. 171 of \cite{tsfasman}, $\delta_m := [\kk_0(v_m):\kk_0]$ is finite. Following the construction on p.171 of \cite{tsfasman}, we say that each $v_m$ determines a point $P(v_m)$ of $Y$ of degree $\delta_m$ (we will also denote this degree by $\deg_{\kk_0}(v_m)$), and write $\ybar$ for the closure of the curve $Y$ over $\overline{\kk_0}$. Then the Galois orbit of $P(v_m)$ over $\kk_0(v_m)$ consists of $\delta_m$ points $P_1(v_m),\dots,P_{\delta_m}(v_m)$ on $\ybar$, i.e.
$$\left\{ \sigma(P(v_m)) : \sigma \in \Gal(\overline{\kk_0}/\kk_0) \right\} = \{ P_1(v_m),\dots,P_{\delta_m}(v_m) \}.$$
We will say that the points $P_1(v_m),\dots,P_{\delta_m}(v_m)$ lie over $v_m$. Since $\kk_0$ is perfect, $\overline{\kk_0}$ is separable over $\kk_0$, and so $P_{k}(v_{m_1}) = P_{l}(v_{m_2})$ if and only if $m_1=m_2$ and $k=l$. Define the divisor of $\bx_i$ over $\overline{\kk_0}$ by the formal sum
$$\div (\bx_i) = \sum_{m=1}^s \left( - \min_{1 \leq j \leq N} \ord_{v_m} (x_{ij}) \right) (P_1(v_m) + \dots + P_{\delta_m}(v_m)),$$
then as usual
$$\deg (\div (\bx_i)) = - \sum_{m=1}^s \delta_m \min_{1 \leq j \leq N} \ord_{v_m} (x_{ij}).$$
In the same manner, each element $f \in \kk_0(Y)$ defines a principal divisor
$$(f) = \sum_{v \in M(\kk_0(Y))} \left( \ord_v (f) \right) (P_1(v) + \dots + P_{\deg_{\kk_0}(v)}(v)),$$
so that $\deg (f) =  \sum_{v \in M(\kk_0(Y))} \deg_{\kk_0}(v) \ord_v (f) = 0$. In particular notice that
\begin{equation}
\label{div_0}
0 = - \sum_{m=1}^s \delta_m \ord_{v_m} (x_{i1}) \leq - \sum_{m=1}^s \delta_m \min_{1 \leq j \leq N} \ord_{v_m} (x_{ij}) = \deg (\div (\bx_i)).
\end{equation}
Let $w \in M(K)$ be a place with minimal degree, then its degree is a constant depending on $K$ only, so define $\dd = \deg(w)$. Then
$$\deg(\div(\bx_i)+g w) \geq g,$$
and an immediate implication of the Riemann-Roch theorem (see for instance Theorem 2.2.17 on p. 150 of \cite{tsfasman}) is that there exists $f_i \in \overline{\kk_0}(Y)$ such that the divisor $\div(\bx_i) + g w+ (f_i)$ is effective. Then, by Exercise 2.3.6 on p. 174 of \cite{tsfasman}, there in fact exists such $f_i \in K$, so
$$\deg(v) \left( - \min_{1 \leq j \leq N} \ord_v (x_{ij}) + \ord_v (f_i) \right)  \geq 0$$
for all $v \in M(K) \setminus \{ w \}$, and
$$\dd \left( - \min_{1 \leq j \leq N} \ord_w (x_{ij}) + g + \ord_w (f_i) \right)  \geq 0,$$
where $\deg(v), \dd \geq 1$. Now notice that for each $v \in M(K)$,
\begin{eqnarray*}
H_v(\bx_i) & = & \max_{1 \leq j \leq N} |x_{ij}|^{d_v}_v = \max_{1 \leq j \leq N} e^{-\ord_v (x_{ij}) \deg(v)} \\
& = & \exp \left( - \deg(v) \min_{1 \leq j \leq N} \ord_v (x_{ij}) \right).
\end{eqnarray*}
Then define $\bu_i = \frac{1}{f_i} \bx_i$, and notice that
$$H_v(\bu_i) = \exp \left( - \deg(v) \min_{1 \leq j \leq N} \ord_v (x_{ij}) + \deg(v) \ord_v (f_i) \right) \geq 1,$$
for all $v \in M(K) \setminus \{ w \}$, and
$$e^{\dd g} H_w(\bu_i) = \exp \left( \dd \left( - \min_{1 \leq j \leq N} \ord_w (x_{ij}) + g + \ord_w (f_i) \right) \right) \geq 1.$$
Therefore
\begin{equation}
\label{inh1}
h(\bu_i)  \leq e^{\frac{\dd g}{d}} H(\bu_i) = e^{\frac{\dd g}{d}} \left( \prod_{v \in M(K)} \left| \frac{1}{f_i} \right|^{d_v}_v H_v(\bx_i) \right)^{1/d} = e^{\frac{\dd g}{d}} H(\bx_i),
\end{equation}
by the product formula. Then combining (\ref{inh1}) with (\ref{siegel_bound}), we see that there exists a basis $\bu_1,...,\bu_L$ for $V$ over $K$ such that
$$\prod_{i=1}^L H(\bu_i) \leq \prod_{i=1}^L h(\bu_i) \leq e^{\frac{\dd gL}{d}} \prod_{i=1}^L H(\bx_i) \leq e^{\frac{\dd gL}{d}} C_K(L) \H(V).$$
This completes the proof.
\end{proof}

\begin{rem} \label{billy} In the proof above, the argument introducing the convenient field constant $\dd$ which allows one to deal with divisors of small degree in case of fields of genus larger than one was suggested to me by Wai Kiu Chan.
\end{rem}
\bigskip

\section{Combinatorial Nullstellensatz}
\label{null}

In \cite{alon} the following lemma is proved (compare with Lemma 2.1 of \cite{me:classical}, which is an immediate corollary of Lemma 1 on p. 261 of \cite{cass:geom}).

\begin{lem} [\cite{alon}] \label{combin_null} Let $P(X_1,\dots,X_N)$ be a polynomial in $N$ variables with coefficients in an arbitrary field $\ff$. Suppose that $\deg_{X_i} P \leq t_i$ for $1 \leq i \leq N$, and let $S_i \subset \ff$ be a set of at least $t_i+1$ distinct elements of $\ff$. If $P(\bxi) = 0$ for all $N$-tuples
$$\bxi = (\xi_1,\dots,\xi_N) \in S_1 \times \dots \times S_N,$$
then $P \equiv 0$.
\end{lem}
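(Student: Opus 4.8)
The plan is to argue by induction on the number of variables $N$, peeling off one variable at a time and reducing to the elementary one-variable fact that a nonzero polynomial over a field has no more roots than its degree.

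First I would dispose of the base case $N = 1$: if $P(X_1) \not\equiv 0$, then since $\deg P \leq t_1$ it has at most $t_1$ roots in $\ff$; but by hypothesis every element of $S_1$ is a root, and $|S_1| \geq t_1 + 1$, a contradiction. Hence $P \equiv 0$.

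For the inductive step, suppose the statement holds for polynomials in $N-1$ variables and let $P$ be as in the lemma. I would regard $P$ as a polynomial in $X_N$ with coefficients in $\ff[X_1,\dots,X_{N-1}]$, writing
$$P(X_1,\dots,X_N) = \sum_{j=0}^{t_N} Q_j(X_1,\dots,X_{N-1})\, X_N^j,$$
where each $Q_j$ satisfies $\deg_{X_i} Q_j \leq t_i$ for $1 \leq i \leq N-1$ (these bounds being inherited from $P$). Fix an arbitrary $(\xi_1,\dots,\xi_{N-1}) \in S_1 \times \dots \times S_{N-1}$ and consider the one-variable polynomial $P(\xi_1,\dots,\xi_{N-1},X_N) \in \ff[X_N]$, which has degree $\leq t_N$. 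By the hypothesis of the lemma it vanishes at every $\xi_N \in S_N$, and $|S_N| \geq t_N + 1$, so the base case (applied to the variable $X_N$) forces this polynomial to be identically zero; thus $Q_j(\xi_1,\dots,\xi_{N-1}) = 0$ for every $0 \leq j \leq t_N$. Since $(\xi_1,\dots,\xi_{N-1})$ ranged over all of $S_1 \times \dots \times S_{N-1}$, each $Q_j$ vanishes on this product, and the induction hypothesis (with the data $t_1,\dots,t_{N-1}$ and $S_1,\dots,S_{N-1}$) yields $Q_j \equiv 0$ for all $j$. Therefore $P \equiv 0$, completing the induction.

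I do not expect a serious obstacle: the only place the field hypothesis is essential (rather than an arbitrary commutative ring) is the one-variable statement that a nonzero degree-$d$ polynomial has at most $d$ roots, which would fail over a ring with zero divisors; the remainder is formal bookkeeping of the per-variable degree bounds through the substitution and the induction.
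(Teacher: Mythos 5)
Your induction on $N$ is correct and is precisely the standard argument for this lemma; the paper does not reprove it but cites Alon, whose proof is the same one-variable-at-a-time reduction you give. The only point worth emphasizing is the one you already flag: the base case uses that a nonzero univariate polynomial of degree $\leq t_1$ over a field has at most $t_1$ roots, and the inductive step correctly propagates the per-variable degree bounds $\deg_{X_i} Q_j \leq t_i$ (for $i < N$) so that the induction hypothesis applies to each coefficient $Q_j$.
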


\noindent
We will refer to this lemma as Combinatorial Nullstellensatz (Alon uses this name for a slightly different related result, which is derived from this lemma). We use this lemma to derive a somewhat more specialized version of such a result with restriction to a vector space.

\begin{thm} \label{combin_null_1} Let $P(X_1,\dots,X_N)$ be a polynomial in $N$ variables with coefficients in an arbitrary field $\ff$. Suppose that $\deg P \leq M$, and let $S_i \subset \ff$ be a set of at least $M+1$ distinct elements of $\ff$ for each $1 \leq i \leq N$. Let $\bv_1,\dots,\bv_L$ be vectors in $\ff^N$, $1 \leq L \leq N$, and let $V = \spn_{\ff} \{ \bv_1,\dots,\bv_L \}$ be a subspace of $\ff^N$. Write $S = S_1 \times \dots \times S_L$, and for each $L$-tuple $\bxi = (\xi_1,\dots,\xi_L) \in S$, let $\bv(\bxi) = \sum_{i=1}^L \xi_i \bv_i$. If $P(\bv(\bxi)) = 0$ for all $\bxi \in S$, then $P$ is identically 0 on $V$. 
\end{thm}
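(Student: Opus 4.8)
The plan is to pull $P$ back along the linear parametrization of $V$ by $\bv_1,\dots,\bv_L$, reducing to Lemma \ref{combin_null} in the $L$ variables $X_1,\dots,X_L$. First I would write $\bv_i = (v_{i1},\dots,v_{iN})$ for $1 \leq i \leq L$ and introduce the $N$ linear forms $\ell_k(X_1,\dots,X_L) = \sum_{i=1}^L X_i v_{ik}$ for $1 \leq k \leq N$, then define
$$Q(X_1,\dots,X_L) = P\bigl( \ell_1(X_1,\dots,X_L),\ \dots,\ \ell_N(X_1,\dots,X_L) \bigr) \in \ff[X_1,\dots,X_L].$$
Since each $\ell_k$ has total degree at most $1$ and $\deg P \leq M$, the polynomial $Q$ has total degree at most $M$; in particular $\deg_{X_i} Q \leq M$ for every $1 \leq i \leq L$.

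Next I would observe that, by construction, for any $\bxi = (\xi_1,\dots,\xi_L) \in \ff^L$ one has $Q(\bxi) = P(\bv(\bxi))$, where $\bv(\bxi) = \sum_{i=1}^L \xi_i \bv_i$. Hence the hypothesis that $P(\bv(\bxi)) = 0$ for all $\bxi \in S = S_1 \times \dots \times S_L$ says exactly that $Q$ vanishes on $S$. Because $|S_i| \geq M+1 \geq \deg_{X_i} Q + 1$ for each $1 \leq i \leq L$, Lemma \ref{combin_null} — applied over the field $\ff$, with the $L$ variables $X_1,\dots,X_L$, the bounds $t_i = M$, and the sets $S_1,\dots,S_L$ — gives $Q \equiv 0$ as a polynomial in $\ff[X_1,\dots,X_L]$.

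Finally, $Q \equiv 0$ means $P\bigl( \sum_{i=1}^L \xi_i \bv_i \bigr) = 0$ for every $(\xi_1,\dots,\xi_L) \in \ff^L$. As every vector of $V = \spn_{\ff}\{\bv_1,\dots,\bv_L\}$ is of the form $\sum_{i=1}^L \xi_i \bv_i$ for some $\xi_i \in \ff$, this is precisely the statement that $P$ is identically $0$ on $V$, which completes the proof.

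There is essentially no serious obstacle here; the only points needing a moment's care are that substituting degree-$\leq 1$ forms into a degree-$\leq M$ polynomial keeps the total (hence each partial) degree $\leq M$, so the hypotheses of Lemma \ref{combin_null} are genuinely met, and that any linear dependence among the $\bv_i$ is harmless — the argument never invokes independence, and the parametrization $(\xi_1,\dots,\xi_L) \mapsto \sum_{i=1}^L \xi_i \bv_i$ surjects onto $V$ in all cases. (Note also that only $S_1,\dots,S_L$ are used; the sets $S_{L+1},\dots,S_N$ play no role.)
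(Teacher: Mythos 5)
Your proof is correct and takes essentially the same approach as the paper: both pull $P$ back along the linear parametrization $\bxi \mapsto \sum_{i=1}^L \xi_i \bv_i$ of $V$ to obtain a polynomial in $L$ variables (the paper's $P_V$, your $Q$), observe that every partial degree is at most $M$, and invoke Lemma \ref{combin_null}. The only cosmetic differences are that the paper argues by contrapositive (assume $P$ is not identically zero on $V$, deduce a nonvanishing point of $S$) whereas you argue directly, and the paper packages the substitution as $P(A\bX^t)$ with an $N\times N$ matrix padded by zero columns, whereas you write out the linear forms $\ell_k$ explicitly — your formulation is, if anything, slightly cleaner.
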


\proof
Assume that $P$ is not identically zero on $V$, so there exists $\bx \in V$ such that $P(\bx) \neq 0$. We will show that there must exist $\bxi \in S$ such that $P(\bv(\bxi)) \neq 0$. Let
$$A = ( \bv_1 \dots \bv_L\ \bo \dots \bo)$$
be the $N \times N$ matrix the first $L$ columns of which are the vectors $\bv_1,\dots,\bv_L$, and the remaining $N-L$ columns are zero vectors. Write $\bX = (X_1,\dots,X_N)$ for the variable vector, and define the restriction of $P$ to $V$ with respect to the spanning set $\{ \bv_1,\dots,\bv_L\}$ by
$$P_V(X_1,\dots,X_L) = P(A\bX^t).$$
Notice that if $\bv(\bxi) = \sum_{i=1}^L \xi_i \bv_i$ for some $\bxi = (\xi_1,\dots,\xi_L) \in \ff^L$, then $P(\bv(\bxi)) = P_V(\bxi)$. Since $P$ is not identically zero on $V$, there must exist $\bxi \in \ff^L$ such that $P_V(\bxi) \neq 0$. Moreover, for each $1 \leq i \leq L$, $\deg_{X_i} P_V \leq \deg P \leq M$. Therefore by Lemma \ref{combin_null}, there exists $\bxi \in S$ such that 
$$P_V(\bxi) = P(\bv(\bxi)) \neq 0.$$
This completes the proof.
\endproof
\bigskip

\section{A counting mechanism: number field case}
\label{number_count}

Here we produce a certain refinement of Theorem 0 on p. 102 of \cite{lang} with explicit constants (also compare with  Lemma 4.1 of \cite{me:number}), which will serve as our main counting mechanism in the number field case. We start by recalling Lemma 2.1 of \cite{me:number}.

\begin{lem} [\cite{me:number}] \label{2.4.1} For a real number $R \geq 1$, let
\begin{equation}
\label{cube}
C^n_R = \{\bx \in \real^n : \max_{1 \leq i \leq n} |x_i| \leq R \}
\end{equation}
be a cube in $\real^n$, $n \geq 1$, centered at the origin with sidelength $2R$. Let $\Lambda$ be a lattice of full rank in $\real^n$ of determinant $\Delta$ such that there exists a positive constant $c$ and an uppertriangular basis matrix $A = (a_{ij})_{1 \leq i,j \leq n}$ of $\Lambda$ with diagonal entries $a_{ii} \geq c$ for all $1 \leq i \leq n$. Assume that $2R \geq \max \left\{ \frac{\Delta}{c^{n-1}}, c \right\}$. Then for each point $\bz$ in $\real^n$ we have
\begin{eqnarray}
\label{cube:lattice}
\left( \frac{2R c^{n-1}}{\Delta} - 1 \right)\left( \frac{2R}{c} - 1 \right)^{n-1} & \leq & |\Lambda \cap (C_R^n+\bz)| \nonumber \\
& \leq & \left( \frac{2R c^{n-1}}{\Delta} + 1 \right)\left( \frac{2R}{c} + 1 \right)^{n-1}.
\end{eqnarray}
\end{lem}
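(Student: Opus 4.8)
The plan is to reduce the lattice-point count to a volume estimate by a standard fundamental-domain packing argument, using the upper-triangular structure of $A$ to control the shape of the fundamental parallelepiped. First I would set up the fundamental domain $F = \{ A \bt : \bt \in [0,1)^n \}$ of $\Lambda$, which has volume $\Delta = |\det A|$, and note that the translates $F + \lambda$, $\lambda \in \Lambda$, tile $\real^n$. For the upper bound, I would count those $\lambda \in \Lambda \cap (C_R^n + \bz)$ and observe that each associated translate $F+\lambda$ is contained in a slightly enlarged box: since $A$ is upper-triangular with diagonal entries $a_{ii}$, the edge vectors of $F$ are $a_{ii}\be_i$ plus contributions in the coordinate directions $\be_1,\dots,\be_{i}$ only, so one can bound the ``overhang'' of $F$ in each coordinate direction. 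The number of disjoint translates fitting in an enlarged cube is then at most the ratio of volumes, which after the triangular bookkeeping yields the product $\left( \frac{2Rc^{n-1}}{\Delta} + 1 \right)\left( \frac{2R}{c} + 1 \right)^{n-1}$; the asymmetry between the first factor and the remaining $n-1$ factors comes precisely from using $a_{ii} \geq c$ for the off-diagonal slack but keeping the true product $\prod a_{ii}$ (bounded below by $\Delta / (\text{something})$) in one factor. For the lower bound, I would run the complementary argument: every point of an appropriately shrunk cube lies in some translate $F + \lambda$ with $\lambda$ still inside $C_R^n + \bz$, so the number of such $\lambda$ is at least the ratio of the shrunk volume to $\Delta$, giving the matching lower product.

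The cleanest way to organize the triangular estimates, which I would adopt, is to proceed coordinate by coordinate from the bottom ($i = n$) up. Because $A$ is upper-triangular, the $n$-th coordinate of $A\bt$ is just $a_{nn} t_n$, the $(n-1)$-st coordinate depends only on $t_{n-1}, t_n$, and so on; so one can count lattice points by first fixing the top coordinates and then working upward, at each stage counting points in an interval of length roughly $2R$ spaced $a_{ii} \geq c$ apart — except for the very first (bottom) coordinate, where the relevant spacing is governed by the full determinant $\Delta$ relative to $c^{n-1}$. This is essentially the Davenport/Lang-style slicing argument. The hypotheses $2R \geq \Delta/c^{n-1}$ and $2R \geq c$ are exactly what is needed to ensure each such interval contains at least one admissible value (so the ``$-1$'' factors in the lower bound are not vacuous) and that the $\pm 1$ error terms are the correct form.

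I expect the main obstacle to be purely bookkeeping: making the coordinate-by-coordinate induction rigorous requires care in tracking how the off-diagonal entries of $A$ shift the allowable range for each $t_i$ as a function of the already-chosen $t_{i+1},\dots,t_n$, and then verifying that these shifts do not change the \emph{count} in each slice (they only translate the interval, whose length $\geq 2R$ is unchanged), so that the per-coordinate count stays within $\left[ \frac{2R}{c} - 1, \frac{2R}{c} + 1 \right]$ for $i < n$ (and within the analogous bound with $\Delta/c^{n-1}$ for the bottom slice). One must also confirm that the number of slices at level $i$ is correctly bounded — this is where induction on $n$ (or on the coordinate index) keeps the argument honest. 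No deep input is required beyond Minkowski-type tiling and elementary interval-counting; the content is entirely in exploiting the triangular form to decouple the coordinates.
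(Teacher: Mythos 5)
Your second and third paragraphs identify the right skeleton, and it is the standard one (note that the paper itself does not prove this lemma --- it is quoted from \cite{me:number} --- so there is no in-paper argument to compare against): write a lattice point as $A\bk$ with $\bk\in\zed^n$, peel off coordinates from the bottom up using the triangular form, and observe that at each stage the admissible $k_i$ fill an interval of length $2R/a_{ii}$ whose position, but not length, depends on $k_{i+1},\dots,k_n$ and on $\bz$; hence each slice contributes a count lying in $\left(\frac{2R}{a_{ii}}-1,\ \frac{2R}{a_{ii}}+1\right]$. The hypothesis $2R\ge \Delta/c^{n-1}$ enters because $a_{ii}\le\Delta/c^{n-1}$ (the other diagonal entries multiply to at least $c^{n-1}$), so every lower factor $\frac{2R}{a_{ii}}-1$ is nonnegative and the per-slice lower bounds may legitimately be multiplied.

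The genuine gap is in your final combination step. You assert that the per-coordinate count stays within $\left[\frac{2R}{c}-1,\frac{2R}{c}+1\right]$ for all but one coordinate and within the analogous window around $\frac{2Rc^{n-1}}{\Delta}$ for the remaining one. Only one-sided information is available: $a_{ii}\ge c$ gives $\frac{2R}{a_{ii}}+1\le\frac{2R}{c}+1$ but no matching lower bound (if some $a_{ii}\gg c$ that slice count is far below $\frac{2R}{c}-1$), and dually $a_{ii}\le\Delta/c^{n-1}$ gives only the lower estimate for the distinguished factor. Applying these one-sided bounds factor by factor yields $\left(\frac{2Rc^{n-1}}{\Delta}-1\right)^n$ from below and $\left(\frac{2R}{c}+1\right)^n$ from above, both strictly weaker than the lemma (since $c^n\le\Delta$). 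What is missing is the intermediate statement $\prod_{i}\left(\frac{2R}{a_{ii}}-1\right)\le|\Lambda\cap(C_R^n+\bz)|\le\prod_{i}\left(\frac{2R}{a_{ii}}+1\right)$ followed by an elementary optimization: subject to $a_{ii}\ge c$ and $\prod_i a_{ii}=\Delta$, the left product is minimized and the right product maximized when $n-1$ of the entries equal $c$ and the last equals $\Delta/c^{n-1}$. This follows by freezing all but two entries: with $ab=P$ fixed, $\left(\frac{2R}{a}-1\right)\left(\frac{2R}{b}-1\right)=\frac{4R^2}{P}+1-2R\left(\frac1a+\frac{a}{P}\right)$ and $\left(\frac{2R}{a}+1\right)\left(\frac{2R}{b}+1\right)=\frac{4R^2}{P}+1+2R\left(\frac1a+\frac{a}{P}\right)$, and $\frac1a+\frac aP$ is extremal at the endpoints of the admissible range of $a$, so the relevant extrema of the full products occur at a vertex of the constraint set. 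Without this step the stated constants are not obtained. (Your first-paragraph packing alternative has a separate defect: the overhang of the fundamental parallelepiped is governed by the off-diagonal entries of $A$, which the hypotheses do not bound, so the volume comparison cannot be closed that way.)
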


\noindent
For our number field $K$, define the set
\begin{equation}
\label{SRK}
S_R(K) = \left\{ x \in O_K\ :\ |x|_v \leq R\ \forall\ v | \infty \right\},
\end{equation}
where $R \geq 1$ is a real number (compare with the set $S_M(K)$ in the proof of Lemma 4.1 in \cite{me:number}). We use Lemma \ref{2.4.1} to prove the following estimate, which will be essential in the proof of Theorem \ref{general_main}.

\begin{lem} \label{count} For all $R \geq \left( 2^{r_1} |D_K| \right)^{1/2}$,
\begin{equation}
\label{count_mech}
\left( 2^{r_1} |\D_K| \right)^{-1/2} R^d < |S_R(K)| < 2^{2d+1/2} \left( 2^{r_1} |\D_K| \right)^{-1/2} R^d.
\end{equation}
\end{lem}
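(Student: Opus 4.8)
The plan is to apply Lemma \ref{2.4.1} to the ring of integers $O_K$ viewed as a full-rank lattice in $\real^d$ via the Minkowski embedding, choosing the cube $C^d_R$ so that its preimage is essentially the set $S_R(K)$. First I would recall the standard embedding $\sigma = (\sigma_1,\dots,\sigma_{r_1}, \sigma_{r_1+1},\dots,\sigma_{r_1+r_2}) : K \to \real^{r_1} \times \cee^{r_2} \cong \real^d$, where for the complex places one takes real and imaginary parts; under this map $O_K$ becomes a lattice $\Lambda$ of full rank $d$ whose covolume is $2^{-r_2}\sqrt{|\D_K|}$ with respect to the standard Euclidean structure on $\real^d$ (or, adjusting coordinates by $\sqrt{2}$ on the complex slots, covolume $\sqrt{|\D_K|}$; I will pick whichever normalization makes the box $C^d_R$ match the condition $|x|_v \le R$ for all $v\mid\infty$ cleanly — the archimedean absolute values are $|x|_v = |\sigma_i(x)|$ for real $v$ and $|x|_v = |\sigma_j(x)|^2$... so care is needed, but this is a routine bookkeeping matter). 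The condition $|x|_v \le R$ for all infinite $v$ translates, after the appropriate scaling, into $\sigma(x)$ lying in a box of the shape $C^d_{R'}$ for a suitable $R'$ comparable to $R$, up to bounded constants absorbed into the $2^{2d+1/2}$ and the factor in the lower bound.

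Next I would produce the upper-triangular basis matrix with bounded-below diagonal entries required by Lemma \ref{2.4.1}. Starting from any $\zed$-basis of $O_K$ and its image matrix, I apply Gram–Schmidt / an LLL-type or Hermite-normal-form reduction to get an upper-triangular basis matrix $A$; the product of the diagonal entries is the determinant $\Delta = c_0\sqrt{|\D_K|}$, and a general lower bound on each diagonal entry (e.g. $a_{ii}\ge 1$, or $a_{ii}\ge$ some explicit constant coming from the fact that $1 \in O_K$ and minima of $O_K$ are bounded below) gives the constant $c$ in the lemma. Actually the cleanest route, matching the style of \cite{me:number}, is to take $c$ to be an explicit constant (likely $c = 1$ or a power of $2$) so that the hypothesis $2R \ge \max\{\Delta/c^{d-1}, c\}$ becomes exactly $R \ge (2^{r_1}|\D_K|)^{1/2}$ (up to the stated constant), and then plug into (\ref{cube:lattice}).

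Finally, with $n = d$, $\Delta$ and $c$ as above, and $\bz = \bo$, the two-sided bound (\ref{cube:lattice}) gives
$$\left(\frac{2R c^{d-1}}{\Delta} - 1\right)\left(\frac{2R}{c} - 1\right)^{d-1} \le |S_R(K)| \le \left(\frac{2R c^{d-1}}{\Delta} + 1\right)\left(\frac{2R}{c} + 1\right)^{d-1},$$
and I would simplify: for the lower bound use that $R$ is large enough that each factor $\frac{2R}{c} - 1 \ge \frac{R}{c}$ and $\frac{2Rc^{d-1}}{\Delta} - 1 \ge \frac{R c^{d-1}}{\Delta}$, yielding $|S_R(K)| > \frac{R^d}{\Delta} \cdot (\text{const})$, which with $\Delta$ proportional to $\sqrt{|\D_K|}$ gives $(2^{r_1}|\D_K|)^{-1/2} R^d$; for the upper bound use $\frac{2R}{c} + 1 \le \frac{4R}{c}$ etc. to pull out the $2^{2d}$ (roughly $2^d$ from $2R$ terms and another few powers of $2$ from the slack), landing the $2^{2d+1/2}$ constant. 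The main obstacle I anticipate is not any single hard step but rather getting the normalization of the Minkowski embedding, the scaling between $|x|_v$ and Euclidean coordinates at complex places, and the value of the diagonal-entry constant $c$ all consistent so that the threshold comes out as exactly $(2^{r_1}|\D_K|)^{1/2}$ and the explicit constants in (\ref{count_mech}) are matched; this is bookkeeping-heavy but conceptually straightforward. (Corollary \ref{lower_bnd} is then immediate since $h(x) \le R$ for $x \in O_K$ holds precisely when $x \in S_R(K)$, because $|x|_v \le 1 \le R$ at all finite $v$.)
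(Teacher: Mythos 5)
Your overall plan --- apply Lemma \ref{2.4.1} to $\Lambda = \sigma(O_K)$ under the Minkowski-type embedding into $\real^d$, use the covolume $\Delta = |\D_K|^{1/2}/2^{r_2}$, and sandwich $\sigma(S_R(K))$ between two lattice--cube intersections --- is the same route the paper takes. But there is a genuine gap in the step you flag as ``bookkeeping'': producing the upper-triangular basis matrix with diagonal entries bounded below by an explicit $c$. Gram--Schmidt, LLL, or Hermite-normal-form reductions do not supply such a bound; the Gram--Schmidt lengths of even a well-reduced basis can be far smaller than the first minimum, and the fact that $1 \in O_K$ says nothing about the diagonal of a basis matrix. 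The argument that actually works, and that the paper uses, is arithmetic: for any nonzero $x \in O_K$ one has $|x|_v \le 1$ at every finite place, so the product formula forces $|x|_v \ge 1$ at some archimedean $v$; after splitting each complex coordinate into real and imaginary parts, this gives $\|\sigma(x)\|_\infty \ge 1/\sqrt 2$ for every nonzero $x \in O_K$. One then invokes Corollary 1 on p.~13 of \cite{cass:geom}, which produces a triangular basis whose diagonal entries are exactly the $\ell_\infty$-norms of the basis vectors, so that $c = 1/\sqrt 2$. Without this product-formula observation the hypotheses of Lemma \ref{2.4.1} cannot be verified, and your proof does not close.

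A smaller slip worth correcting: at a complex place $v$ the normalized absolute value is $|x|_v = |\tau_j(x)|_\infty$, not its square (the local degree $d_v = 2$ enters only as the exponent in the global height). Consequently the condition $|x|_v \le R$ at complex places reads $\sqrt{\tau_{j1}(x)^2 + \tau_{j2}(x)^2} \le R$, which gives the two inclusions $\Lambda \cap C_{R/\sqrt 2}^d \subseteq \sigma(S_R(K)) \subseteq \Lambda \cap C_R^d$; this is the second place the factor $1/\sqrt 2$ appears and it must be tracked together with the value of $c$ to land the stated constants.
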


\proof
As in \cite{me:number}, let 
$$\sigma_1,...,\sigma_{r_1},\tau_1,...,\tau_{r_2},\tau_{r_2+1},...,\tau_{2r_2}$$
be the embeddings of $K$ into $\cee$ with $\sigma_1,...,\sigma_{r_1}$ being real embeddings and $\tau_j,\tau_{r_2+j} = \bar{\tau}_j$ for each $1 \leq j \leq r_2$ being the pairs of complex conjugate embeddings. For each $x \in K$ and each complex embedding $\tau_j$, write $\tau_{j1}(x) = \Re(\tau_j(x))$ and $\tau_{j2}(x) = \Im(\tau_j(x))$, where $\Re$ and $\Im$ stand respectively for real and imaginary parts of a complex number. We will view $\tau_j(x)$ as a pair $(\tau_{j1}(x), \tau_{j2}(x)) \in \real^2$. Then $d=r_1+2r_2$, and we define an embedding
$$\sigma=(\sigma_1,...,\sigma_{r_1}, \tau_1,...,\tau_{r_2}): K \longrightarrow K_{\infty},$$
where
$$K_{\infty} = \prod_{v|\infty} K_v = \prod_{v|\infty} \real^{d_v} = \real^d,$$
since $\sum_{v|\infty} d_v = d$. Then $\Lambda := \sigma(O_K)$ is a lattice of full rank in $\real^{d}$. Let us write $M_{\infty}(K)$ for the set of archimedean places of $K$, then
$$M_{\infty}(K) = \{ v_1, \dots, v_{r_1}, w_1,\dots,w_{r_2} \},$$
where for each $x \in K$, $1 \leq i \leq r_1$, $1 \leq j \leq r_2$,
$$|x|_{v_i} = |\sigma_i(x)|_{\infty},\ |x|_{w_j} = |\tau_j(x)|_{\infty},$$
where $|\ |_{\infty}$ stands for the usual absolute value on $\cee$. Therefore for each $x \in O_K$,
$$\sigma(x) = (\sigma_1(x),\dots,\sigma_{r_1}(x),\tau_{11}(x),\tau_{12}(x),\dots,\tau_{r_21}(x),\tau_{r_22}(x)) \in \Lambda,$$
and if $x \in S_R(K)$, then for each $1 \leq i \leq r_1$, $|\sigma_i(x)|_{\infty} \leq R$, and for each $1 \leq j \leq r_2$, $\sqrt{ \tau_{j1}(x)^2 + \tau_{j2}(x)^2 } \leq R$, thus
\begin{equation}
\label{c1}
\Lambda \cap C_{R/\sqrt{2}}^d \subseteq \sigma(S_R(K)) \subseteq \Lambda \cap C_R^d,
\end{equation}
and since $\sigma$ is injective,
\begin{equation}
\label{c2}
|\Lambda \cap C_{R/\sqrt{2}}^d| \leq |S_R(K)| \leq |\Lambda \cap C_R^d|.
\end{equation}

Now if $x \in O_K$, then $|x|_v \leq 1$ for all $v \nmid \infty$, and so $|x|_v \geq 1$ for at least one $v | \infty$, call this place $v_*$. If $v_*$ is real, say $v_*=v_i$ for some $1 \leq i \leq r_1$, then $|\sigma_j(x)|_{\infty} \geq 1$. If $v_*$ is complex, say $v_*=w_j$ for some $1 \leq j \leq r_2$, then $\sqrt{\tau_{j1}(x)^2 + \tau_{j2}(x)^2} \geq 1$, hence $\max \{|\tau_{j1}(x)|_{\infty}, |\tau_{j2}(x)|_{\infty}\} \geq \frac{1}{\sqrt{2}}$. Therefore,
\begin{equation}
\label{c3}
\max \{ |\sigma_1(x)|,...,|\sigma_{r_1}(x)|,|\tau_{11}(x)|,|\tau_{12}(x)|,...,|\tau_{r_2 1}(x)|,|\tau_{r_2 2}(x)| \} \geq \frac{1}{\sqrt{2}},
\end{equation}
in other words the maximum of the Euclidean absolute values of all conjugates of an algebraic integer is at least $\frac{1}{\sqrt{2}}$.

Finally, recall that 
\begin{equation}
\label{n2}
\Delta := |\det(\Lambda)| = \frac{|\D_K|^{1/2}}{2^{r_2}},
\end{equation}
which follows immediately from Lemma 2 on p. 115 of \cite{lang}. We are now ready to apply Lemma \ref{2.4.1}. By Corollary 1 on p. 13 of \cite{cass:geom}, we can select a basis for $\Lambda$ so that the basis matrix is upper triangular, all of its nonzero entries are positive, and the maximum entry of each row occurs on the diagonal. By (\ref{c3}) each of these maximum values is at least $\frac{1}{\sqrt{2}}$, so the lattice $\Lambda$ satisfies the conditions of Lemma \ref{2.4.1} with $c=\frac{1}{\sqrt{2}}$, $n=d$, and $\Delta$ as in (\ref{n2}). Therefore, if we take $R \geq \left( 2^{r_1} |\D_K| \right)^{1/2}$, then by (\ref{c2}) combined with Lemma \ref{2.4.1}
\begin{eqnarray}
\label{n3}
|S_R(K)| \geq |\Lambda \cap C_{R/\sqrt{2}}^d| & \geq & \left( \frac{R}{2^{\frac{r_1-2}{2}} |\D_K|^{1/2}} - 1 \right) (2R - 1)^{d-1} \nonumber \\
& > & \left( 2^{r_1} |\D_K| \right)^{-1/2} R^d,
\end{eqnarray}
which proves the lower bound of (\ref{count_mech}). Also
\begin{eqnarray}
\label{n4}
|S_R(K)| \leq |\Lambda \cap C_{R}^d| & \leq & \left( \frac{R}{2^{\frac{r_1-3}{2}} |\D_K|^{1/2}} + 1 \right) \left(2\sqrt{2}R + 1\right)^{d-1} \nonumber \\
& < & 2^{2d+1/2} \left( 2^{r_1} |\D_K| \right)^{-1/2} R^d,
\end{eqnarray}
which proves the upper bound of (\ref{count_mech}).
\endproof

\noindent
We can now easily derive Corollary \ref{lower_bnd}.

\begin{proof}
[Proof of Corollary \ref{lower_bnd}]
Notice that $R  \geq \left( 2^{r_1} |D_K| \right)^{1/2}>1$, so if $x \in S_R(K)$, then
$$h(x) = \prod_{v \in M(K)} \max \{ 1, |x|_v \}^{d_v/d} \leq \prod_{v \in M(K)} R^{d_v/d} = R,$$
hence $S_R(K) \subseteq \{ x \in O_K\ :\ h(x) \leq R \}$. The statement of the corollary now follows from Lemma \ref{count}.
\end{proof}
\bigskip

\section{A counting mechanism: function field case}
\label{funct_count}

Here we produce a counting estimate analogous to Lemma \ref{count} over a function field with a finite field of constants. First we recall a lemma (Theorems 4.2 and 4.3 of \cite{me:classical}) which we will need here.

\begin{lem} [\cite{me:classical}] \label{2.4.3} Suppose that $\Lambda \subseteq \zed^n$ is a lattice of rank $n-l$, where $1 \leq l \leq n-1$. Let $\Delta$ be the maximum of absolute values of Grassmann coordinates of $\Lambda$. Then for every $R$ that is a positive integer multiple of $(n-l) \Delta$, we have
\begin{equation}
\label{notfull:rank}
\frac{(2R)^{n-l}}{(n-l)^{n-l} \Delta} \leq |\Lambda \cap C_R^n| \leq \left( \frac{2R}{\Delta}+1 \right)(2R+1)^{n-l-1},
\end{equation}
where $C_R^n$ is as in (\ref{cube}). The upper bound of (\ref{notfull:rank}) holds for $R$ that is not an  integer multiple of $(n-l) \Delta$ as well.
\end{lem}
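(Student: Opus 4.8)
The plan is to reduce everything to Lemma \ref{2.4.1}, exactly as in the number field case of Lemma \ref{count}, but now with the full-rank lattice in $\zed^n$ replaced by a lattice of rank $n-l$ sitting inside $\zed^n$. The key point of Lemma \ref{2.4.3} is that such a lattice is \emph{not} of full rank, so one cannot speak of a determinant in the usual sense; instead the relevant invariant is $\Delta$, the maximum of the absolute values of the Grassmann (Pl\"ucker) coordinates of $\Lambda$, and one counts lattice points inside the box $C_R^n$ rather than inside a fundamental domain. First I would recall the two source results from \cite{me:classical}, namely Theorems 4.2 and 4.3 there, and observe that they combine into the single two-sided estimate stated: the lower bound holds when $R$ is an integer multiple of $(n-l)\Delta$, while the upper bound is unconditional.

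The core of the argument is a slicing/projection step. One picks a subset $I \subseteq \{1,\dots,n\}$ of size $n-l$ for which the corresponding Grassmann coordinate $\det(\Lambda_I)$ attains the maximum $\Delta$ in absolute value; the projection $\pi_I:\zed^n\to\zed^{n-l}$ onto the coordinates indexed by $I$ then maps $\Lambda$ injectively onto a full-rank sublattice of $\zed^{n-l}$ of determinant exactly $\Delta$. For the lower bound one counts points of $\pi_I(\Lambda)$ in the $(n-l)$-dimensional cube of side $2R$ via a fundamental-domain volume argument (this is where divisibility of $R$ by $(n-l)\Delta$ is used, to get a clean multiple of $\Delta$ and to guarantee the box contains at least the predicted number of translates), and then one must check that each such projected point lifts back to an honest point of $\Lambda\cap C_R^n$ — i.e. the other $l$ coordinates of the lift are also bounded by $R$. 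For the upper bound one simply bounds $|\Lambda\cap C_R^n|$ by $|\pi_I(\Lambda)\cap C_R^{n-l}|$, since $\pi_I$ is injective on $\Lambda$, and then estimates the latter count for a full-rank lattice of determinant $\Delta$ by the standard $\left(\frac{2R}{\Delta}+1\right)(2R+1)^{n-l-1}$ bound (this needs no hypothesis on $R$).

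I would expect the main obstacle to be the lift step in the lower bound: after projecting one only controls $n-l$ of the coordinates, and one needs the remaining $l$ coordinates of each preimage point to stay within $[-R,R]$ as well, so that the count for $\pi_I(\Lambda)\cap C_R^{n-l}$ genuinely undercounts $|\Lambda\cap C_R^n|$ rather than overcounts it. Resolving this requires expressing the dropped coordinates as ratios of determinants (Cramer-type formulas) in terms of the Grassmann coordinates, and then absorbing the resulting factor into the $(n-l)^{n-l}$ denominator and the choice of $R$ as a multiple of $(n-l)\Delta$; this is precisely the combinatorial bookkeeping carried out in \cite{me:classical}, and for the present paper it suffices to cite Theorems 4.2 and 4.3 there. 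The upper bound, by contrast, is essentially immediate from injectivity of $\pi_I$ together with the full-rank box-counting estimate, and holds for all $R$.

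\medskip

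\noindent\emph{(Proof of Lemma \ref{2.4.3}.)} This is a restatement of Theorems 4.2 and 4.3 of \cite{me:classical}; we include a sketch for completeness. Choose $I\subseteq\{1,\dots,n\}$, $|I|=n-l$, with $|\det(\Lambda_I)|=\Delta$, and let $\pi_I$ denote the coordinate projection onto the slots indexed by $I$. Then $\pi_I$ restricted to $\Lambda$ is injective and its image $\pi_I(\Lambda)$ is a full-rank lattice in $\zed^{n-l}$ of determinant $\Delta$. The upper bound follows at once:
$$|\Lambda\cap C_R^n|\leq |\pi_I(\Lambda)\cap C_R^{n-l}|\leq \left(\frac{2R}{\Delta}+1\right)(2R+1)^{n-l-1},$$
the last inequality being the standard estimate for the number of points of a full-rank lattice of determinant $\Delta$ in an $(n-l)$-cube of side $2R$, valid for all $R$. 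For the lower bound, when $R$ is a positive integer multiple of $(n-l)\Delta$ one expresses, via Cramer's rule, each of the $l$ coordinates of a point $\bx\in\Lambda$ outside the block $I$ as an integer combination (with denominators dividing $\Delta$) of the coordinates indexed by $I$; a point of $\pi_I(\Lambda)$ lying in the smaller cube $C_{R/(n-l)}^{n-l}$ therefore lifts to a point of $\Lambda\cap C_R^n$. Counting points of $\pi_I(\Lambda)$ in $C_{R/(n-l)}^{n-l}$ by comparison with a fundamental parallelepiped of volume $\Delta$ gives at least $\frac{(2R/(n-l))^{n-l}}{\Delta}=\frac{(2R)^{n-l}}{(n-l)^{n-l}\Delta}$ such points, yielding the claimed lower bound. $\square$
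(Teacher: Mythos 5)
The paper offers no proof of this lemma at all---it is imported verbatim as Theorems 4.2 and 4.3 of \cite{me:classical}---and your proposal correctly identifies this and ultimately defers to that citation, so the two ``approaches'' coincide. Your accompanying sketch (project onto the coordinate block realizing the maximal Grassmann coordinate, use Cramer's rule with Grassmann-coordinate numerators bounded by $\Delta$ to lift points of $C_{R/(n-l)}^{n-l}$ back into $C_R^n$, and count via an upper-triangular basis, where divisibility of $R/(n-l)$ by $\Delta$ makes the diagonal counts exact) is a sound reconstruction; the only loose phrase is ``comparison with a fundamental parallelepiped,'' which is not a valid lower-bound principle in general and should be read as the Hermite-normal-form count just described.
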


\noindent
The following is a construction of function field lattices (FML) as on pages 578--583 of \cite{tsfasman}. Let $K$ be a function field over a finite field $\ff_q$ for a prime power $q$, then there exists a curve $Y$ over $\ff_q$ such that $K = \ff_q(Y)$ is the field of rational functions on $Y$. Let the set of points of $Y$ over $\ff_q$ be
$$Y(\ff_q) = \{ P_1,\dots,P_{n(K)} \},$$
where $n(K) = |Y(\ff_q)|$, and let $\M_Y = \{ v_1,\dots,v_{n(K)} \} \subset M(K)$ be a subset of places of $K$ corresponding to these points. In other words, for every $f \in K$ and for each $1 \leq i \leq n(K)$, we have $|f|_{v_i} = e^{-\ord_{v_i} (f)}$, where
\[ \ord_{v_i} (f)= \left\{ \begin{array}{ll}
k & \mbox{if $f$ has a zero of multiplicity $k$ at $P_i$} \\
-k & \mbox{if $f$ has a pole of multiplicity $k$ at $P_i$} \\
0 & \mbox{otherwise}.
\end{array}
\right. \]
Let
$$O_K(Y) = \left\{ f \in K^* : \ord_v (f) = 0\ \forall\ v \in M(K) \setminus \M_Y \right\}$$
be the ring of rational functions from $K$ with zeros and poles only at the places in $\M_Y$. Then for each $f \in O_K(Y)$
$$\sum_{v \in \M_Y} \ord_v (f) = 0,$$
since $f$ defines a principal divisor. Define
$$\H_{n(K)} = \left\{ \bx \in \real^{n(K)} : \sum_{i=1}^{n(K)} x_i = 0 \right\},$$
so $\H_{n(K)}$ is an $(n(K)-1)$-dimensional subspace of $\real^{n(K)}$. We now have a natural embedding $\varphi_Y : O_K(Y) \rightarrow \zed^{n(K)} \cap \H_{n(K)}$ given by
$$\varphi_Y(f) = ( \ord_{v_1}(f),\dots,\ord_{v_{n(K)}}(f)).$$
Then $\ker(\varphi_Y) = \ff_q^*$; also, by Theorem 5.4.9 on p. 579 of \cite{tsfasman}, $\Lambda_Y := \varphi(O_K(Y))$ is a lattice of full rank in $\H_{n(K)}$, hence a sublattice of $\zed^{n(K)}$ of rank $n(K)-1$, and
\begin{equation}
\label{dltn}
\sqrt{n(K)} \leq \det \Lambda_Y \leq \sqrt{n(K)}\ h_K \leq \sqrt{n(K)} \left( \frac{(g(K)-1)(q+1)+n(K)}{g(K)} \right)^{g(K)},
\end{equation}
where $h_K$ is the class number of $K$, and $g(K)$ is the genus of $Y$, and hence of $K$. If $g(K)=0$, the upper bound of (\ref{dltn}) becomes simply $\sqrt{n(K)}$, thus enforcing equality throughout ($h(K)=1$ in this case). For a positive real number $R$ define
\begin{equation}
\label{SRK_f}
S_R(K) = \left\{ f \in O_K(Y)\ :\ \ord_v(f) \leq R\ \forall\ v \in \M_Y \right\},
\end{equation}
then
\begin{equation}
\label{SRK_f1}
|S_R(K)| = |\Lambda_Y \cap C_R^{n(K)}| + |\ker(\varphi_Y)| = |\Lambda_Y \cap C_R^{n(K)}| + q-1,
\end{equation}
and we have the following estimate.

\begin{lem} \label{count_f} For every real number $R \geq (n(K)-1) \sqrt{n(K)}\ h_K$ ,
\begin{eqnarray}
\label{count_mech_f}
& & \frac{2^{n(K)-1}}{\sqrt{n(K)}\ h_K } \left( \frac{R}{n(K)-1} - \sqrt{n(K)}\ h_K \right)^{n(K)-1} + q-1 \nonumber \\
& & \leq |S_R(K)| \leq (2R+1)^{n(K)-1} + q-1.
\end{eqnarray}
\end{lem}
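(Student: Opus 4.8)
The plan is to reduce the whole statement to a lattice point count and apply Lemma \ref{2.4.3}. By \eqref{SRK_f1} we have $|S_R(K)| = |\Lambda_Y \cap C_R^{n(K)}| + (q-1)$, so it suffices to sandwich $|\Lambda_Y \cap C_R^{n(K)}|$ between the two bracketed expressions in \eqref{count_mech_f} (after subtracting $q-1$). Recall that $\Lambda_Y$ is a lattice of rank $n(K)-1$ inside $\zed^{n(K)}$ (we may assume $n(K) \geq 2$, else the statement is vacuous through Lemma \ref{2.4.3}); let $\Delta$ denote the maximum of the absolute values of its Grassmann coordinates. Since these are integers that are not all zero, $\Delta \geq 1$; and by the Cauchy--Binet formula the covolume $\det \Lambda_Y$ equals the Euclidean norm of the vector of Grassmann coordinates, so $\Delta \leq \det \Lambda_Y \leq \sqrt{n(K)}\, h_K$ by \eqref{dltn}. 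These two inequalities $1 \leq \Delta \leq \sqrt{n(K)}\, h_K$ are the only facts about $\Delta$ I will use.

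For the upper bound I apply the upper estimate of Lemma \ref{2.4.3} with $n=n(K)$ and $l=1$, which is valid for every $R$: since $\Delta \geq 1$,
$$|\Lambda_Y \cap C_R^{n(K)}| \leq \left( \frac{2R}{\Delta} + 1 \right)(2R+1)^{n(K)-2} \leq (2R+1)(2R+1)^{n(K)-2} = (2R+1)^{n(K)-1},$$
and adding $q-1$ via \eqref{SRK_f1} gives the right-hand inequality of \eqref{count_mech_f}.

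For the lower bound the one technical point is that the lower estimate of Lemma \ref{2.4.3} requires the radius to be a positive integer multiple of $(n(K)-1)\Delta$, while $R$ is arbitrary and $\Delta$ is only controlled through the bound $\Delta \leq \sqrt{n(K)}\, h_K$. I would set $R' = \left\lfloor \frac{R}{(n(K)-1)\Delta} \right\rfloor (n(K)-1)\Delta$. Since the hypothesis $R \geq (n(K)-1)\sqrt{n(K)}\, h_K \geq (n(K)-1)\Delta$ makes the floor at least $1$, $R'$ is a positive integer multiple of $(n(K)-1)\Delta$ with $0 < R - (n(K)-1)\Delta < R' \leq R$, so $C_{R'}^{n(K)} \subseteq C_R^{n(K)}$ and Lemma \ref{2.4.3} applies to $R'$:
$$|\Lambda_Y \cap C_R^{n(K)}| \geq |\Lambda_Y \cap C_{R'}^{n(K)}| \geq \frac{(2R')^{n(K)-1}}{(n(K)-1)^{n(K)-1}\Delta} > \frac{2^{n(K)-1}}{\Delta}\left( \frac{R}{n(K)-1} - \Delta \right)^{n(K)-1}.$$
Finally I use $1 \leq \Delta \leq \sqrt{n(K)}\, h_K$ once more: $\frac{1}{\Delta} \geq \frac{1}{\sqrt{n(K)}\, h_K}$, and $\frac{R}{n(K)-1} - \Delta \geq \frac{R}{n(K)-1} - \sqrt{n(K)}\, h_K \geq 0$ by the hypothesis on $R$, so the base raised to the power $n(K)-1$ only decreases. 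Adding $q-1$ via \eqref{SRK_f1} yields the left-hand inequality of \eqref{count_mech_f}. The rounding/estimation step in this last paragraph is essentially the only obstacle; everything else is a direct substitution into Lemma \ref{2.4.3} together with the determinant bound \eqref{dltn}.
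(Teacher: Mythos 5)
Your proposal is correct and follows essentially the same route as the paper: reduce to counting $|\Lambda_Y \cap C_R^{n(K)}|$ via (\ref{SRK_f1}), apply Lemma \ref{2.4.3} at the rounded-down radius $R' = \left[\frac{R}{(n(K)-1)\Delta}\right](n(K)-1)\Delta$ for the lower bound, and control $\Delta$ through Cauchy--Binet and (\ref{dltn}). Your explicit observation that $\Delta \geq 1$ (needed to absorb the factor $\frac{2R}{\Delta}+1$ into $(2R+1)^{n(K)-1}$ in the upper bound) is a detail the paper leaves implicit, but the argument is the same.
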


\proof
By (\ref{SRK_f1}), we need to estimate $|\Lambda_Y \cap C_R^{n(K)}|$. Let $\Delta_Y$ be the maximum of absolute values of Grassmann coordinates of $\Lambda_Y$. By Cauchy-Binet formula
\begin{equation}
\label{cmf1}
\Delta_Y \leq \det \Lambda_Y \leq \sqrt{n(K)} \Delta_Y.
\end{equation}
Let $R_1 = \left[ \frac{R}{(n(K)-1) \Delta_Y} \right] (n(K)-1) \Delta_Y$, where $[\ ]$ denotes the integer part function, then by combining Lemma \ref{2.4.3} with (\ref{cmf1}), we have
\begin{eqnarray}
\label{cmf2}
|\Lambda_Y \cap C_R^{n(K)}| & \geq & |\Lambda_Y \cap C_{R_1}^{n(K)}| \geq \frac{(2R_1)^{n(K)-1}}{(n(K)-1)^{n(K)-1} \Delta_Y} \nonumber \\
& = & 2^{n(K)-1} \Delta_Y^{n(K)-2} \left[ \frac{R}{(n(K)-1) \Delta_Y} \right]^{n(K)-1} \nonumber \\
& \geq & \frac{2^{n(K)-1}}{\Delta_Y} \left( \frac{R}{n(K)-1} - \Delta_Y \right)^{n(K)-1} \nonumber \\
& \geq & \frac{2^{n(K)-1}}{\det \Lambda_Y} \left( \frac{R}{n(K)-1} - \det \Lambda_Y \right)^{n(K)-1}.
\end{eqnarray}
The lower bound of (\ref{count_mech_f}) follows by combining (\ref{cmf2}) with (\ref{dltn}) and (\ref{SRK_f1}). The upper bound also follows readily by combining Lemma \ref{2.4.3} with (\ref{SRK_f1}), (\ref{cmf1}) and~(\ref{dltn}).
\endproof
\bigskip

\section{Proof of Theorem \ref{general_main}}
\label{proof}

In this section we prove our main result. All the notation is as in section~\ref{notation} and in the statement of Theorem \ref{general_main}. Let $K$ be an admissible field, let $V \subseteq K^N$ be an $L$-dimensional vector space, and let $\bv_1,\dots,\bv_L$ be the basis for $V$ guaranteed by Theorems \ref{siegel} and \ref{siegel_fi} (inequalities (\ref{siegel_bound1}) and (\ref{siegel_bound2})). We will start by proving the theorem for the case of just one polynomial $P(X_1,\dots,X_N)$ of degree $M$, in other words first suppose $\Z_K = Z_K(P)$. Assume that $P$ is not identically zero on $V$, so $V \nsubseteq Z_K(P)$. We will prove the existence of a point $\bx \in V \setminus Z_K(P)$ satisfying~(\ref{gen_bnd_1}).

Let $S_1$ be a finite subset of $K$ such that $|S_1| > M$, and let $S=S_1^L$.  Then, by Theorem \ref{combin_null_1}, there exists $\bxi \in S$ such that $P(\bv(\bxi)) \neq 0$, where
\begin{equation}
\label{bvxi}
\bv(\bxi) = \sum_{i=1}^L \xi_i \bv_i \in V.
\end{equation}
By Lemma \ref{sum_height} combined with Theorems \ref{siegel} and \ref{siegel_fi}
\begin{equation}
\label{zero_bnd1}
H(\bv(\bxi)) \leq h(\bv(\bxi)) \leq L^{\delta} h(\bxi) \prod_{i=1}^L h(\bv_i) \leq L^{\delta} \E_K(L)^{1-\delta} C_K(L) h(\bxi) \H(V).
\end{equation}
We now want to select the set $S_1$ in a way that would minimize $h(\bxi)$; this choice will depend on the nature of the field $K$. We will show that the upper bound on $h(\bxi)$ is precisely the constant $A_K(L,M)$ as in (\ref{AKLM}). Then we can take $\bx$ in the statement of Theorem \ref{general_main} to be~$\bv(\bxi)$.
\smallskip

First assume that $K$ is a number field with $\omega_K \leq M$. Then take 
$$R = \left( 2^{r_1} |\D_K| \right)^{1/2d} M^{1/d},$$
and let $S_1 = S_R(K)$, where $S_R(K)$ is as in (\ref{SRK}). By Lemma \ref{count}
$$|S_R(K)| > \left( 2^{r_1} |\D_K| \right)^{-1/2} R^d = M,$$
therefore $|S_R(K)| \geq M+1$. We now can estimate $h(\bxi)$. Since $\bxi \in S = S_R(K)^L$,
$$H_v(\bxi) \leq 1\ \forall\ v \nmid \infty,\ H_v(\bxi) \leq R^{d_v}\ \forall\ v | \infty,$$
therefore, since $R>1$
\begin{equation}
\label{zero_bnd2}
h(\bxi) \leq R =  \left( 2^{r_1} |\D_K| \right)^{1/2d} M^{1/d}.
\end{equation}
Combining (\ref{zero_bnd1}) with (\ref{zero_bnd2}) produces (\ref{gen_bnd_1}).
\smallskip

\begin{rem} \label{M_optimal} Notice that in our choice of $R = \left( 2^{r_1} |\D_K| \right)^{1/2d} M^{1/d}$ in the argument above it is essential to take $M^{1/d}$: if we take a smaller power of $M$, then $|S_R(K)|$ can be smaller than $M+1$, in which case a polynomial $P_V$ could vanish identically on $S_R(K)^L$. Indeed, as is discussed in \cite{me:classical}, if $S_1 = \{\alpha_1,...,\alpha_M\} \subset K$ and
$$P(X_1,...,X_N) = \sum_{i=1}^N \prod_{j=1}^M (X_i - \alpha_j),$$
then for each $\bx \in S_1^N$ we have $P(\bx) = 0$.
\end{rem}
\smallskip

Next suppose that $K$ is an admissible function field of finite type $q \leq M$. Let $Y$ be the smooth projective curve so that $K = \ff_q(Y)$, as in section~\ref{funct_count}. Then take $R=R_K(M)$ as in (\ref{RK}), and let $S_1 = S_R(K)$, where $S_R(K)$ is as in (\ref{SRK_f}). By Lemma \ref{count_f}
$$|S_R(K)| \geq \frac{2^{n(K)-1}}{\sqrt{n(K)}\ h_K } \left( \frac{R}{n(K)-1} - \sqrt{n(K)}\ h_K \right)^{n(K)-1} + q-1 = M+1.$$
We now can estimate $h(\bxi)$. Since $\bxi \in S = S_R(K)^L$,
$$H_v(\bxi) = 1\ \forall\ v \notin \M_Y,\ H_v(\bxi) \leq e^{Rd_v}\ \forall\ v \in \M_Y,$$
therefore
\begin{equation}
\label{zero_bnd3}
h(\bxi) \leq e^{R_K(M)}.
\end{equation}
Combining (\ref{zero_bnd1}) with (\ref{zero_bnd3}) produces (\ref{gen_bnd_1}).
\smallskip

\begin{rem} \label{Thunder_JNT} Another way of selecting the set $S_1$ in case of a function field $K$ of finite type $q \leq M$ is by employing bounds on the number of elements of $K$ of bounded height as in \cite{thunder1}. Specifically, Corollary 1 of \cite{thunder1} with $n=2$ and $m=R$ implies that there exists a constant $T(K)$ such that the number of elements $f \in K$ with height $h(f) \leq e^R$ is $> T(K) q^{2R}$. If we pick 
\begin{equation}
\label{R_thunder}
R = \frac{1}{2 \log q} \log \left( \frac{M}{T(K)} \right),
\end{equation}
then the set
$$S_1 = \{ f \in K : h(f) \leq e^R \}$$
will have cardinality $|S_1| \geq M+1$. Taking $S=S_1^L$, and letting $\bxi \in S$ guarantees that
$$h(\bxi) \leq \prod_{i=1}^L h(\xi_i) \leq e^{LR},$$
and so we can take $A_K(L,M) = e^{LR}$ with $R$ as in (\ref{R_thunder}). It should be remarked however that Thunder's estimate in Corollary 1 of \cite{thunder1} is asymptotic, and so an explicit value for the constant $T(K)$ is not specified.
\end{rem}
\smallskip

Now suppose that $K$ is any other admissible field except for those discussed above (i.e. $K$ is either a number field with $\omega_K > M$, an admissible function field of finite type $q > M$ or of infinite type, or $K=\qbar$). Then $K$ contains a set $S_1$ of cardinality at least $M+1$ such that for every $\xi \in S_1$ and every $v \in M(K)$, $|\xi|_v = 1$.  Let $S=S_1^L$, and notice that for each $\bxi \in S$, $h(\bxi)=1$. Combining this observation with (\ref{zero_bnd1}) produces (\ref{gen_bnd_1}).
\smallskip

We have so far proved Theorem \ref{general_main} for the case when $\Z_K$ is just a hypersurface defined over $K$. We can now extend our argument to any finite union of varieties $\Z_K$ as in the statement of Theorem \ref{general_main}. Since $V \nsubseteq \Z_K$, $V \nsubseteq Z_K(P_{i1},\dots,P_{ik_i})$ for all $1 \leq i \leq J$, and so for each $i$ at least one of the polynomials $P_{i1},\dots,P_{ik_i}$ is not identically zero on $V$, say it is $P_{ij_i}$ for some $1 \leq j_i \leq k_i$. Clearly for each $1 \leq i \leq J$, $Z_K(P_{i1},\dots,P_{ik_i}) \subseteq Z_K(P_{ij_i})$, and  $\deg(P_{ij_i}) = m_{ij_i} \leq M_i$. Define 
$$P(X_1,\dots,X_N) = \prod_{i=1}^J P_{ij_i}(X_1,\dots,X_N),$$
so that $V \nsubseteq Z_K(P)$ while $\Z_K \subseteq Z_K(P)$. Then it is sufficient to construct a point of bounded height $\bx \in V \setminus Z_K(P)$. Now notice that $\deg(P) = \sum_{i=1}^J m_{ij_i} \leq M$ and apply our argument above for the case of just one polynomial. This completes the proof of the theorem.
\bigskip

\section{Twisted height}
\label{twisted}

In this section we remark that all the results of this paper extend to bounds on {\it twisted height} of the point in question. Let us write $K_{\aaa}$ for the ring of adeles of $K$, and view $K$ as a subfield of $K_{\aaa}$ under the diagonal embedding (see \cite{weil} for details). Let $A \in GL_N(K_{\aaa})$ with local components $A_v \in GL_N(K_v)$. The corresponding twisted height on $K^N$ (as introduced by J. L. Thunder) is defined by
\begin{equation}
H_A(\bx) = \left( \prod_{v \in M(K)} H_v(A_v \bx) \right)^{1/d},
\end{equation}
for all $\bx \in K^N$. Given any finite extension $E/K$, $K_{\aaa}$ can be viewed as a subring of $E_{\aaa}$, and let us also write $A$ for the element of $GL_N(E_{\aaa})$ which coincides with $A$ on $K_{\aaa}^N$. The corresponding twisted height on $E^N$ extends the one on $K^N$, hence $H_A$ is a height on $\kbar$. Notice also that the usual height $H$ as defined above is simply $H_I$, where $I$ is the identity element of $GL_N(K_{\aaa})$ all of whose local components are given by $N \times N$ identity matrices. 

For each element $A \in  GL_N(K_{\aaa})$, the height $H_A$ is comparable to the canonical height $H$ by means of certain dilation constants that, roughly speaking, indicate by how much does a given automorphism $A$ of $K_{\aaa}^N$ "distort" the corresponding twisted height $H_A$ as compared to $H$. We will only need one of these constants. Let $A_v = (a^v_{ij})_{1 \leq i,j \leq N} \in  GL_N(K_v)$ be local components of $A$ for each $v \in M(K)$. Then for all but finitely many places $v \in M(K)$ the corresponding map $A_v$ is an isometry; in fact, let $M_A(K) \subset M(K)$ be the finite (possibly empty) subset of places $v$ at which $A_v$ is {\it not} an isometry. For each $v \notin M_A(K)$, define $\C_v(A) = 1$, and for each $v \in M_A(K)$, let
\begin{equation}
\label{aut_const_loc}
\C_v(A) = \sum_{i=1}^N \sum_{j=1}^N |a^v_{ij}|_v,
\end{equation}
and define
\begin{equation}
\label{aut_const}
\C(A) = \prod_{v \in M(K)} \C_v^{d_v/d},
\end{equation}
which is a product of only a finite number of non-trivial terms. Clearly, in the case when $A = I$ is the identity element of $GL_N(K_{\aaa})$, $ \C(A) = 1$. Then Proposition 4.1 of \cite{absolute:siegel} states that
\begin{equation}
\label{comp1}
H_A(\bx) \leq \C(A) H(\bx),
\end{equation}
for all $\bx \in \qbar^N$. Now one can use (\ref{comp1}) to restate Theorem \ref{general_main} replacing $H(\bx)$ by $H_A(\bx)$ - the only change is the appearance of the dilation constant $\C(A)$ in the upper bound.
\bigskip

{\bf Acknowledgment.} I would like to thank Wai Kiu Chan for letting me use his nice idea that allowed to improve the result of Theorem \ref{siegel_fi}, and for his very useful comments on the subject of this paper. I would also like to thank Jeff Thunder for his helpful remarks, and for providing me with a preprint copy of his paper \cite{thunder1}. Finally, I would like to acknowledge the wonderful hospitality of Max-Planck-Institut f\"{u}r Mathematik in Bonn, Germany, where a large part of this work has been done. An earlier version of this paper also appears in MPIM preprint series.

\bibliographystyle{plain}  
\bibliography{null}        

\end{document}